\numberwithin{equation}{section}
\newcommand{\beq}{\begin{equation}}
\newcommand{\eeq}{\end{equation}}
\newcommand{\bea}{\begin{eqnarray}}
\newcommand{\eea}{\end{eqnarray}}
\newcommand{\beas}{\begin{eqnarray*}}
\newcommand{\eeas}{\end{eqnarray*}}
\newtheorem{theorem}{Theorem}[section]
\newtheorem{proposition}[theorem]{Proposition}
\newtheorem{corollary}[theorem]{Corollary}
\newtheorem{lemma}[theorem]{Lemma}
\newtheorem{remark}[theorem]{Remark}
\newtheorem{example}[theorem]{Example}
\newtheorem{examples}[theorem]{Examples}
\newtheorem{foo}[theorem]{Remarks}
\newcommand{\p}{\partial}
\newcommand{\bM}{\mathbb M}
\newcommand{\Rn}{\mathbb R^n}
\newcommand{\M}{\mathbb M}
\newcommand{\ve}{\varepsilon}
\title[Riesz transforms] {A note on the boundedness of Riesz transform for some subelliptic operators}
\author{Fabrice Baudoin}
\address{Department of Mathematics\\Purdue University \\
West Lafayette, IN 47907} \email[Fabrice Baudoin]{fbaudoin@math.purdue.edu}
\thanks{First author supported in part by
NSF Grant DMS-0907326}
\author{Nicola Garofalo}
\address{Department of Mathematics\\Purdue University \\
West Lafayette, IN 47907} \email[Nicola
Garofalo]{garofalo@math.purdue.edu}
\thanks{Second author supported in part by NSF Grant DMS-1001317}
\begin{document}

\maketitle

\begin{abstract}
Let $\M$ be a smooth connected non-compact manifold endowed with a smooth measure $\mu$ and a smooth locally subelliptic diffusion operator $L$ satisfying $L1=0$, and which is symmetric with respect to $\mu$. We show that if $L$ satisfies, with a non negative curvature parameter $\rho_1$,  the generalized curvature inequality in \eqref{CD} below,  then the  Riesz transform is bounded in $L^p (\bM)$ for every $p>1$, that is 
\[
\left\| \sqrt{\Gamma((-L)^{-1/2}f)} \right\|_p \le C_p \| f \|_p, \quad f \in C^\infty_0(\bM),
\]
where $\Gamma$ is the \textit{carr\'e du champ} associated to $L$. Our results apply in particular to all Sasakian manifolds whose horizontal Tanaka-Webster Ricci curvature is nonnegative, all Carnot groups with step two, and wide subclasses of principal bundles over Riemannian manifolds whose Ricci curvature is nonnegative.
\end{abstract}

\tableofcontents

\section{Introduction}

A central result in the analysis of $\Rn$ is the $L^p$ continuity of singular integrals in the range $1<p<\infty$. One basic consequence of this result is the $L^p$ boundedness of the Riesz transforms $\mathcal R_j = \frac{\p}{\p x_j} (-\Delta)^{-1/2}$, $j=1,...,n$, with their vector-valued counterpart 
\[
\mathcal R = (\mathcal R_1,...,\mathcal R_n) = \nabla (-\Delta)^{-1/2},
\]
see \cite{S}. In \cite{Str} Strichartz asked the question whether such $L^p$ continuity of the Riesz transform could be extended to non-compact Riemannian manifolds under suitable assumptions on the latter.  In this context the analogue of the vector-valued Riesz transform is the operator 
\[
\mathcal R  = \nabla \Delta^{-1/2}, 
\]
where we have denoted by $\Delta$ the Laplacian on $\M$ in its realization as a positive self-adjoint operator on $L^2(\M)$.
Strichartz's question is important for the purpose of developing analysis on manifolds.  To explain this point let us indicate by $(\cdot,\cdot)$ the inner product in $L^2(\M)$, and with $\Delta^{1/2}$ the positive self-adjoint square root of $\Delta$. Then one has the equality
\[
(\Delta f,f) = (\Delta^{1/2}f,\Delta^{1/2}f).
\]
This immediately gives 
\[
 \| | \nabla f | \|_2=\| \Delta^{1/2} f \|_2,
\]
which in turn allows to identify the first-order Sobolev subspaces of $L^2(\M)$ obtained by completion of $C^\infty_0(\M)$ with respect to the seminorms $\| | \nabla f | \|_2$ and $\| \Delta^{1/2} f \|_2$. Let us also notice in passing that the latter equality can be reformulated in terms of $\mathcal R$ as follows
\[
\| \mathcal R f\|_2 = \| f\|_2.
\]

However, when $1<p<\infty$ and $p \neq 2$, a similar identification of the two Sobolev spaces of order one obtained by completion of $C^\infty_0(\M)$ with respect to the seminorms $ \| | \nabla f | \|_p$ and $\| \Delta^{1/2} f \|_p$ is no longer such a simple matter. 
It is a well-known fact that an estimate such as 
\begin{align}\label{RZ}
A_p \| \Delta^{1/2} f \|_p \le \| | \nabla f | \|_p \le B_p \| \Delta^{1/2} f \|_p,\ \ \ \ \ f \in C_0^\infty(\bM),
\end{align}
would suffice for such identification. It is also known that the validity of the right-hand inequality in \eqref{RZ} for a certain $1<p<\infty$ implies that of  the left-hand inequality in $L^{p'}(\M)$, where $\frac 1p + \frac 1p' = 1$. 

Now the right-hand inequality in \eqref{RZ} is equivalent to the $L^p$ continuity of the Riesz operator $\mathcal R$.
It is then clear that \eqref{RZ} is true  for all $1<p<\infty$ if 
\begin{equation}\label{RZ2}
||\mathcal R f||_p \le C_p ||f||_p,\ \ \ \ \ f\in C^\infty_0(\M),
\end{equation}
for $1<p<\infty$, and this clarifies the relevance of the question raised by Strichartz.

\

An interesting result due to Bakry \cite{B} states that if the Ricci curvature of $\bM$ is bounded from below by a non negative constant then \eqref{RZ2}, and therefore \eqref{RZ} hold for every $1<p<\infty$. 
The purpose of the present note is to extend this result to a sub-Riemannian framework by using the generalized curvature-dimension inequality recently introduced by the authors in \cite{BG1}. 

\

This extension has been recently become possible thanks to a combination of the theory developed in the two papers \cite{BG1}, \cite{BBG}, with the remarkable results in \cite{CD}, \cite{ACDH}. The latter  two works have established that \eqref{RZ2} does hold in the range $1<p<\infty$ for complete, non-compact Riemannian manifolds satisfying suitable general assumptions which will be discussed below. In \cite{CD} the authors have proved that \eqref{RZ2} is true when $1<p\le 2$. In the paper \cite{ACDH} the authors have established \eqref{RZ2} in the remaining range $2\le p<\infty$. 
The essential new contribution of the present note is to verify that such general assumptions are verified (in a non-trivial manner) for a general class of locally subelliptic operators satisfying on a given smooth manifold $\M$ the generalized curvature-dimension inequality CD$(\rho_1,\rho_2,\kappa,d)$ in \eqref{CD} below, with curvature parameter $\rho_1\ge 0$ (this in the Riemannian case corresponds to Ric$ \ge 0$). Once this is done, the $L^p$ continuity of an appropriately defined Riesz operator will follow by the general real variable methods developed in \cite{CD}, \cite{ACDH}. 

\

To state the main result in this paper we assume that $\bM$ be a $C^\infty$ connected, non-compact manifold endowed with a smooth measure $\mu$. Throughout the paper, the notation $L^p(\M)$, $1\le p\le \infty$, indicates the space of $p$-summable functions on $\M$ with respect to the measure $\mu$. We assume that on $\M$ a second-order diffusion operator $L$ with real coefficients is given. We also suppose that $L$ be locally subelliptic, non-positive, and that it satisfy the assumptions listed in Section \ref{S:background}. There is a natural notion of (square of the length of the)``gradient'' associated with $L$, namely
\[
\Gamma(f) = \frac 12 \{L(f^2) - 2 f Lf\},
\]
 and a canonical distance $d$, see \eqref{di} below. We assume throughout that the metric space $(\M,d)$ be complete.
We also suppose that $\M$ be endowed with another bilinear differential form $\Gamma^Z$, see \eqref{gammaZ} below, and that $\Gamma$ and $\Gamma^Z$ satisfy all the hypothesis in Section \ref{S:background} below. From our perspective, the most significant assumption is the so-called generalized curvature-dimension inequality CD$(\rho_1,\rho_2,\kappa,d)$ in \eqref{CD} below, which we now recall for the reader's convenience: 

\

\emph{There exist constants $\rho_1 \ge 0$,  $\rho_2 >0$, $\kappa \ge 0$, and $d\ge 2$ such that the inequality 
\begin{equation}\label{CDi}
\Gamma_2(f) +\nu \Gamma_2^Z(f) \ge \frac{1}{d} (Lf)^2 +\left( \rho_1 -\frac{\kappa}{\nu} \right) \Gamma(f) +\rho_2 \Gamma^Z(f)
\end{equation}
 hold for every  $f\in C^\infty(\bM)$ and every $\nu>0$, where $\Gamma_2$ and $\Gamma_2^Z$ are defined by \eqref{gamma2} and \eqref{gamma2Z} below}.

\

The assumption \eqref{CDi} constitutes a sub-Riemannian generalization of the classical curvature-diemension inequality CD$(\rho,n)$ 
\[
\Gamma_2(f) \ge \frac 1n (\Delta f)^2 + \rho \Gamma(f),
\]
which, as a consequence of the well-known Bochner's identity, is known to hold on any $n$-dimensional Riemannian manifold satisfying Ric$ \ge \rho$.  

\

The parameter $\rho_1$ in CD$(\rho_1,\rho_2,\kappa,d)$ has the meaning of a lower bound on a sub-Riemannian Ricci tensor, see \cite{BG1} for extensive details. Throughout the present paper the assumption $\rho_1\ge 0$ will be in force. The semigroup $P_t = e^{t L}$ is a strongly continuous semigroup of contraction operators on $L^p(\M)$ for $1\le p\le \infty$. We denote by $p(x,y,t) = p(y,x,t)$ the positive heat kernel on $\M$ associated with the semigroup $P_t$. Given $f\in C^\infty_0(\M)$, the function 
\[
u(x,t) = P_t f(x) = \int_M f(y) p(x,y,t) d\mu(y),
\]
is a solution of the equation $Lu - u_t = 0$ in $\M\times (0,\infty)$, corresponding to the initial datum $u(x,0) = f(x)$, $x\in \M$. 

\

We now recall that, in the general framework described above, in the paper \cite{BG1} we proved a generalized Li-Yau type inequality for solutions of the heat equation on $\M$ of the form $u = P_t f$. From such inequality, we were able to derive several basic facts, among which the following off-diagonal Gaussian upper  bound: \emph{For any $0<\ve <1$
there exists a constant $C(\rho_2,\kappa,d,\ve)>0$, which tends
to $\infty$ as $\ve \to 0^+$, such that for every $x,y\in \bM$
and $t>0$ one has}
\begin{equation}\label{gue}
p(x,y,t)\le \frac{C(d,\kappa,\rho_2,\ve)}{V(x,\sqrt
t)^{\frac{1}{2}}V(y,\sqrt t)^{\frac{1}{2}}} \exp
\left(-\frac{d(x,y)^2}{(4+\ve)t}\right).
\end{equation}
Hereafter in this paper we adopt the notation
\[
V(x,r) = \mu(B(x,r)),
\]
where for $x\in \M$ and $r>0$ we have let $B(x,r) = \{y\in \M\mid d(y,x)<r\}$.

\

In the paper \cite{BBG} we further developed the program initiated in \cite{BG1} and were able to obtain the following basic result: \emph{There exists a constant $C_d>0$ depending only on $\rho_1, \rho_2, \kappa, d$, such that for every $x\in \M$ and $r>0$ one has}
\begin{equation}\label{dci} 
V(x,2r) \le C_d V(x,r).
\end{equation}
For a purely analytical proof of \eqref{dci} in the Riemannian setting we refer the reader to the paper \cite{BG2}.

Now in their work \cite{CD} the authors proved that the two results \eqref{gue} and \eqref{dci} are enough to establish the weak-$(1,1)$ continuity of the Riesz transforms for the space of homogeneous type $(\M,d)$. Since from integration by parts, and from the identity $(Lf,f) = \|(-L)^{1/2} f\|_2$  the strong $L^2$ continuity of the Riesz transform
\[
\|\sqrt{\Gamma((-L)^{1/2})f)}\|_2 = \|f\|_2
\]
trivially follows,
by the Marcinckiewicz interpolation theorem we thus obtain the following result.

\begin{theorem}\label{T:riesz12}
Let $1<p \le 2$.  There is a constant $C_p >0$ such that for every $f \in C^\infty_0(\M)$,
\begin{equation}\label{rtp}
\left\| \sqrt{\Gamma((-L)^{-1/2}f)} \right\|_{L^p(\M)} \le C_p \| f \|_{L^p(\M)}.
\end{equation}
\end{theorem}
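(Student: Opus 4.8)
The plan is to assemble the statement from three ingredients that have all been prepared in the preceding discussion: the volume-doubling property \eqref{dci}, the Gaussian upper bound \eqref{gue}, and the $L^2$ identity for the Riesz transform. First I would record that, thanks to \eqref{dci}, the triple $(\M,d,\mu)$ is a space of homogeneous type, so that the Calder\'on--Zygmund real-variable machinery developed in \cite{CD} becomes available.

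Next I would settle the $L^2$ endpoint. Setting $g=(-L)^{-1/2}f$ and integrating by parts against $\mu$, using that $L$ is symmetric and $L1=0$ (so that $\int_\M Lh\,d\mu=0$ for admissible $h$, and hence $\int_\M \Gamma(h)\,d\mu=\int_\M (-Lh)\,h\,d\mu$), one obtains
\[
\int_\M \Gamma(g)\, d\mu = \int_\M (-Lg)\, g\, d\mu = \left( (-L)^{1/2}f,\, (-L)^{-1/2}f\right) = \|f\|_2^2 ,
\]
where the last step uses self-adjointness of the fractional powers. This gives the strong $(2,2)$ bound $\|\sqrt{\Gamma((-L)^{-1/2}f)}\|_2=\|f\|_2$ with constant one.

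The heart of the argument is the weak-$(1,1)$ bound, which I would obtain by invoking the abstract theorem of \cite{CD}. Their result asserts that on a space of homogeneous type carrying a sub-Markovian semigroup $P_t=e^{tL}$ whose heat kernel obeys a Gaussian upper bound, and whose associated length-of-the-gradient is $\sqrt{\Gamma(\cdot)}$, the Riesz transform $f\mapsto\sqrt{\Gamma((-L)^{-1/2}f)}$ is of weak type $(1,1)$. The principal obstacle—and really the only non-routine step—is to check that the present subelliptic setting fits this abstract framework exactly: one must confirm that $P_t$ is a contraction on every $L^p(\M)$ (which follows from $L1=0$ together with the symmetry and subellipticity hypotheses of Section \ref{S:background}), that the \textit{carr\'e du champ} $\Gamma$ genuinely plays the role of the gradient through the integration-by-parts identity just displayed, and that \eqref{gue} supplies precisely the form of Gaussian kernel bound required in \cite{CD}. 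Once these structural data are matched, their weak-$(1,1)$ estimate applies verbatim.

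Finally, armed with the strong $(2,2)$ bound and the weak-$(1,1)$ bound, I would apply the Marcinkiewicz interpolation theorem on the space of homogeneous type $(\M,d,\mu)$ to deduce the strong $L^p$ estimate \eqref{rtp} for every $1<p<2$, the endpoint $p=2$ being already in hand. I expect the doubling reduction and the interpolation to be entirely routine; the genuine work lies in aligning the subelliptic data with the hypotheses of the Coulhon--Duong theorem.
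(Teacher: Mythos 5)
Your proposal is correct and follows essentially the same route as the paper: volume doubling makes $(\M,d,\mu)$ a space of homogeneous type, the Gaussian upper bound on the heat kernel feeds into the Coulhon--Duong theorem to give the weak-$(1,1)$ estimate, the $L^2$ identity comes from integration by parts, and Marcinkiewicz interpolation (for the sublinear operator $f\mapsto\sqrt{\Gamma((-L)^{-1/2}f)}$) closes the range $1<p\le 2$. The paper's own argument is exactly this assembly of \eqref{gue}, \eqref{dci}, and Theorem 1.1 of \cite{CD}.
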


Theorem \ref{T:riesz12} provides the $L^p$ continuity of the absolute value of the Riesz operator
\[
T f = \sqrt{\Gamma((-L)^{-1/2}f)},
\]
within the range $1<p\le 2$. We emphasize is that $T$ is a sublinear operator.
For the remaining range $2\le p<\infty$ we appeal to the real variable theory developed in the work \cite{ACDH}. We recall the salient ingredients of the general approach in that paper:
\begin{itemize}
\item[1)] $e^{tL} 1 = 1$ (stochastic completeness);
\item[2)]  global doubling condition;
\item[3)]  global Poincar\'e inequality;
\item[4)]  Caccioppoli type inequalities;
\item[5)] Gaffney type estimates;
\item[6)] bounds for $\sqrt t \sqrt{\Gamma(e^{tL})}$.
\end{itemize}
  
\

As for 1) the stochastic completeness in our framework follows as a special case of Theorem 3.5 in \cite{BG1}, see also \cite{Mu} for an extension of such result.
Regarding 2) we have already discussed \eqref{dci}. As for 3), we mention that in \cite{BBG} it was proved that there exists $C_p>0$, depending only on $\rho_1, \rho_2, \kappa, d$, such that for every $x\in \M$ and $r>0$ one has
\begin{equation}\label{pisr}
\int_{B(x,r)} |f - f_B|^2 d\mu \le C_p r^2 \int_{B(x,r)} \Gamma(f)  d\mu,
\end{equation}
for every $f\in C^1(\overline B(x,r))$. Thus 3. is available to us. 

\

We are thus missing ingredients 4), 5) and 6) In this note we establish these results, see Corollary \ref{C:caccioppoli}, Lemmas \ref{L:gaf1}, \ref{L:gaf2} and \ref{L:gaf3}, and Theorem \ref{T:ugb} below. This allows us to close the circle and, by using the work \cite{ACDH}, obtain the following result.

\begin{theorem}\label{T:riesz2infty}
Let $2\le p <\infty$.  There is a constant $C_p >0$ such that for every $f \in C^\infty_0(\M)$,
\begin{equation}\label{rtp}
\left\| \sqrt{\Gamma((-L)^{-1/2}f)} \right\|_{L^p(\M)} \le C_p \| f \|_{L^p(\M)}.
\end{equation}
\end{theorem}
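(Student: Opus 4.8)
The plan is to verify, in our sub-Riemannian setting, the complete list of hypotheses 1)--6) underlying the real-variable machinery of \cite{ACDH}, and then to apply the main boundedness theorem of that paper to deduce \eqref{rtp} in the range $2 \le p < \infty$ (the endpoint $p=2$ being the trivial $L^2$ identity $\|Tf\|_2 = \|f\|_2$ already recorded above). Three of the six ingredients are at our disposal without further work: stochastic completeness 1) is a special case of Theorem 3.5 in \cite{BG1}, the global doubling property 2) is \eqref{dci}, and the $L^2$ Poincar\'e inequality 3) is \eqref{pisr}. Everything therefore reduces to establishing the Caccioppoli inequality 4), the Gaffney estimates 5), and the semigroup gradient bound 6); these are genuinely new in the subelliptic regime, because the elliptic Bochner identity that drives the Riemannian arguments of \cite{B} must be replaced throughout by the generalized curvature-dimension inequality \eqref{CDi}.

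For the Caccioppoli estimate 4), recorded in Corollary \ref{C:caccioppoli}, I would run the classical energy method expressed purely through the intrinsic calculus of $\Gamma$ and $L$. Choosing a Lipschitz cutoff $\chi$ that equals $1$ on $B(x,r)$, is supported in $B(x,2r)$, and satisfies $\Gamma(\chi) \le C r^{-2}$ (such cutoffs exist since $(\M,d)$ is complete for the intrinsic distance \eqref{di}), I would test the equation $Lu=0$ against $\chi^2 u$, integrate by parts using the symmetry of $L$ and the Leibniz rule for $\Gamma$, and absorb the cross term by Cauchy--Schwarz to obtain
\[
\int_{B(x,r)} \Gamma(u)\, d\mu \le \frac{C}{r^2} \int_{B(x,2r)} u^2\, d\mu .
\]
The only delicate point is that $\Gamma$ is a degenerate, horizontal form, so no ambient elliptic gradient is available and all manipulations must be phrased in terms of $\Gamma$ and $L$ alone.

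For the Gaffney off-diagonal estimates 5), the content of Lemmas \ref{L:gaf1}, \ref{L:gaf2} and \ref{L:gaf3}, I would use the Davies exponential-perturbation method. Given disjoint sets $E,F$ and $f$ supported in $E$, I would introduce a bounded weight $e^{\al \phi}$ with $\phi$ Lipschitz and $\Gamma(\phi)\le 1$, conjugate the semigroup, derive a differential inequality for the weighted energy $\frac{d}{dt}\|e^{\al\phi}P_t f\|_2^2$, and conclude the decay $e^{-d(E,F)^2/Ct}$ by a Gronwall argument. The estimate for the semigroup itself is then standard; the difficulty lies in the companion estimates for the gradient $\sqrt{\Gamma(P_t)}$, where one must combine the Caccioppoli bound of the previous step with the full inequality \eqref{CDi} in order to dominate the auxiliary terms involving $\Gamma^Z$ that have no Riemannian analogue. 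The pointwise Gaussian bound \eqref{gue} supplies the complementary control.

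Finally, for the gradient bound 6), which I expect to record as the Gaussian-type estimate of Theorem \ref{T:ugb}, I would exploit the generalized Li--Yau inequality of \cite{BG1}, valid under the standing hypothesis $\rho_1 \ge 0$. This yields a pointwise inequality of the form $t\,\Gamma(P_t f) \le C\, P_t(f^2)$; since $P_t$ is a contraction on $L^{p/2}(\M)$ for $p \ge 2$, raising to the power $p/2$ and integrating gives the uniform bound $\|\sqrt t\,\sqrt{\Gamma(P_t f)}\|_p \le C\|f\|_p$ demanded by \cite{ACDH}. With 1)--6) all in force, the theorem of \cite{ACDH} applies and produces \eqref{rtp} for $2 \le p < \infty$. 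I expect the main obstacle to be precisely the passage from off-diagonal decay of $P_t$ to that of its gradient $\sqrt{\Gamma(P_t)}$: unlike the Riemannian case, where Bochner's formula controls the gradient of the semigroup directly, here the coupled form $\Gamma^Z$ and the penalty term $-\kappa/\nu$ in \eqref{CDi} must be carried through the exponential-weight and Li--Yau computations, optimized over $\nu$, without degrading the Gaussian decay --- and it is exactly here that the generalized curvature-dimension inequality does the essential work.
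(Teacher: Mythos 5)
Your overall strategy --- reduce to the real-variable theorem of \cite{ACDH} by verifying ingredients 1)--6), with 1)--3) already available from \cite{BG1} and \cite{BBG} --- is exactly the paper's, and your treatment of the $L^2$ Gaffney estimate for $P_t$ via the Davies exponential weight with $\Gamma(\psi)\le 1$ coincides with the paper's Lemma \ref{L:gaf1}. The gaps lie in the two remaining ingredients. First, the ``Caccioppoli'' estimate the argument actually needs is not the elliptic inequality $\int_{B(x,r)}\Gamma(u)\,d\mu \le Cr^{-2}\int_{B(x,2r)}u^2\,d\mu$ for $L$-harmonic $u$ that you propose, but the semigroup reverse Poincar\'e inequality $\Gamma(P_tf)+\rho_2 t\,\Gamma^Z(P_tf)\le \frac{1+2\kappa/\rho_2}{2t}\bigl(P_t(f^2)-(P_tf)^2\bigr)$ of Proposition \ref{P:caccioppoli}, from which the bounds $\|\sqrt{\Gamma(P_tf)}\|_p\le Ct^{-1/2}\|f\|_p$ for $2\le p\le\infty$ and the $L^1$ kernel gradient bound \eqref{L1} follow. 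You do state the correct target inequality $t\,\Gamma(P_tf)\le C\,P_t(f^2)$ under your item 6), but attributing it to the Li--Yau inequality of \cite{BG1} is not right: that inequality controls $\Gamma(\ln P_tf)$ for nonnegative $f$ and does not yield the variance-type bound. The actual derivation is a Bakry--Ledoux interpolation: apply CD$(0,\rho_2,\kappa,\infty)$ with $\nu=b/a$ to the coupled functional $\phi=a(t)\Gamma(P_{T-t}f)+b(t)\Gamma^Z(P_{T-t}f)$ with $a=(T-t)/\rho_2$ and $b=(T-t)^2$ (so that $b'+2\rho_2a\equiv 0$), invoke the parabolic comparison principle, and integrate $\Psi(t)=\tfrac12 P_t\bigl((P_{T-t}f)^2\bigr)$; this is where $\Gamma^Z$ and the curvature-dimension inequality genuinely enter.

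Second, your sketch of the gradient Gaffney estimate (the paper's Lemma \ref{L:gaf3}) is missing its essential ingredient. The proof does not run the CD inequality through the exponential weights to ``dominate the $\Gamma^Z$ terms''; rather, one writes, for a Lipschitz cutoff $\varphi$ adapted to a neighborhood $F_\ve$ of $F$,
\[
t\int_\M \varphi^2\,\Gamma(P_tf)\,d\mu=-t\int_\M \varphi^2\,P_tf\,LP_tf\,d\mu-2t\int_\M\varphi\,P_tf\,\Gamma(\varphi,P_tf)\,d\mu,
\]
and closes the estimate using the off-diagonal bound $t\|LP_tf\|_{L^2(F_\ve)}\le Ce^{-d(E,F_\ve)^2/6t}\|f\|_{L^2(E)}$. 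That bound (Lemma \ref{L:gaf2}) is obtained from the analyticity of $P_z$ in a sector, via a Cauchy integral of $e^{zL}$ over the circle $z=t\bigl(1+\tfrac12 e^{i\theta}\bigr)$ together with the weighted estimate \eqref{phipt2} for complex times; nothing in your proposal supplies this step, and without it the cross term produced by the integration by parts cannot be absorbed. These two points need to be filled in before the appeal to \cite{ACDH} is justified.
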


By combining Theorems \ref{T:riesz12} and \ref{T:riesz2infty} we obtain the following result.

\begin{theorem}\label{T:equivalence}
Let $1<p<\infty$. There exist constants $A_p, B_p>0$ such that 
\begin{equation}\label{RZsr}
A_p \| (-L)^{1/2} f \|_p \le \| \sqrt{\Gamma(f)} \|_p \le B_p \| (-L)^{1/2} f \|_p,\ \ \ \ \ f \in C_0^\infty(\bM),
\end{equation}
\end{theorem}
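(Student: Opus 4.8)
The plan is to deduce both inequalities in \eqref{RZsr} from the $L^p$ boundedness of the Riesz operator $Tf = \sqrt{\Gamma((-L)^{-1/2}f)}$, $1<p<\infty$, which is exactly the content of Theorems \ref{T:riesz12} and \ref{T:riesz2infty} taken together. The right-hand inequality is immediate: substituting $g = (-L)^{1/2} f$ into the bound $\|T g\|_p \le C_p \|g\|_p$ and using $(-L)^{-1/2} g = f$ gives
\[
\|\sqrt{\Gamma(f)}\|_p = \|T g\|_p \le C_p \|(-L)^{1/2} f\|_p,
\]
which is the right-hand side of \eqref{RZsr} with $B_p = C_p$. (Here one should note that $g = (-L)^{1/2}f$ need not lie in $C^\infty_0(\bM)$, so the bound for $T$ must first be extended from $C^\infty_0(\bM)$ to the relevant domain by density.)

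For the left-hand inequality I would argue by duality, exactly along the lines of the general principle recalled after \eqref{RZ}. With $\frac1p + \frac1{p'} = 1$ and using that $(-L)^{1/2}$ is self-adjoint on $L^2(\bM)$, one has
\[
\|(-L)^{1/2} f\|_p = \sup \left\{ ((-L)^{1/2} f, g) : \|g\|_{p'} \le 1 \right\},
\]
where it suffices to let $g$ range over the dense class of functions $g = (-L)^{1/2} w$ with $w \in C^\infty_0(\bM)$. For such $g$, the integration-by-parts identity $((-L)f, w) = \int_{\bM} \Gamma(f,w)\, d\mu$ — a consequence of $L1=0$ and the symmetry of $L$ with respect to $\mu$ — combined with the Cauchy--Schwarz inequality for the bilinear \textit{carr\'e du champ}, H\"older's inequality in the exponents $p,p'$, and the already established right-hand inequality of \eqref{RZsr} in $L^{p'}(\bM)$, yields
\begin{align*}
((-L)^{1/2} f, g) &= ((-L)^{1/2} f, (-L)^{1/2} w) = \int_{\bM} \Gamma(f,w)\, d\mu \le \int_{\bM} \sqrt{\Gamma(f)}\, \sqrt{\Gamma(w)}\, d\mu \\
&\le \|\sqrt{\Gamma(f)}\|_p\, \|\sqrt{\Gamma(w)}\|_{p'} \le B_{p'}\, \|\sqrt{\Gamma(f)}\|_p\, \|(-L)^{1/2} w\|_{p'} = B_{p'}\, \|\sqrt{\Gamma(f)}\|_p\, \|g\|_{p'}.
\end{align*}
Taking the supremum over $\|g\|_{p'}\le 1$ gives $\|(-L)^{1/2} f\|_p \le B_{p'}\, \|\sqrt{\Gamma(f)}\|_p$, which is the left-hand inequality of \eqref{RZsr} with $A_p = 1/B_{p'}$.

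The step I expect to require the most care is not any individual estimate but the rigorous justification of the duality argument: one must check that the functions $(-L)^{1/2} w$ with $w \in C^\infty_0(\bM)$ form a dense subset of $L^{p'}(\bM)$ (or else carry out the supremum through a suitable approximation/limiting procedure), and that the integration-by-parts identity and the Cauchy--Schwarz inequality for $\Gamma$ are valid on the class of functions to which they are applied. These facts are standard in the self-adjoint, stochastically complete framework recalled in Section \ref{S:background}, but they are the only genuinely nontrivial points; once they are secured, both inequalities in \eqref{RZsr} follow mechanically from the $L^p$ continuity of $T$ established in Theorems \ref{T:riesz12} and \ref{T:riesz2infty}.
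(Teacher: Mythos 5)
Your proposal is correct and follows essentially the route the paper intends: the paper gives no separate proof of Theorem \ref{T:equivalence} beyond the remark that it follows by combining Theorems \ref{T:riesz12} and \ref{T:riesz2infty}, together with the duality principle stated after \eqref{RZ} (the right-hand inequality in $L^{p'}$ implies the left-hand one in $L^p$), which is exactly the argument you carry out. A small simplification worth noting: testing against arbitrary $g\in C^\infty_0(\bM)$ and writing $((-L)^{1/2}f,g)=\int_{\bM}\Gamma\bigl(f,(-L)^{-1/2}g\bigr)\,d\mu$, then applying the Riesz bound to $g$ itself in $L^{p'}$, avoids the density question for the range of $(-L)^{1/2}$ that you flag (which in the case $\mu(\bM)<\infty$ is handled by the paper's footnote restricting to mean-zero functions).
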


\

The results in this paper establish the continuity of the Riesz transform and the equivalence of the Sobolev spaces defined by completion of $C^\infty_0(\M)$ with respect to the two seminorms in \eqref{RZsr} for the various classes of sub-Riemannian manifolds which are encompassed by the general framework of \cite{BG1}. While we refer the reader to that source for a detailed discussion of the examples, here we confine ourselves to mention the following basic result which is a corollary of our work.

\begin{theorem}\label{T:sasakian}
Let $(\bM,\theta)$ be a \emph{CR} manifold  with real dimension $2n+1$ and vanishing Tanaka-Webster torsion, i.e., a Sasakian manifold. If there exists $\rho_1\ge 0$ such that
for every $x\in \bM$ the Tanaka-Webster Ricci tensor satisfies the bound  
\[
\emph{Ric}_x(v,v)\ \ge \rho_1|v|^2,
\]
for every horizontal vector $v\in \mathcal H_x$,
then given any $1<p<\infty$ the Riesz transform associated with a sub-Laplacian on $\M$ is continuous on $L^p(\M)$.
\end{theorem}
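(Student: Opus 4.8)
The plan is to deduce Theorem~\ref{T:sasakian} from Theorem~\ref{T:equivalence}, so that the entire argument reduces to verifying that a Sasakian manifold whose horizontal Tanaka--Webster Ricci tensor is bounded below by $\rho_1 \ge 0$ satisfies the generalized curvature-dimension inequality \eqref{CDi}. First I would fix the geometric framework: on $(\bM,\theta)$ let $Z$ denote the Reeb vector field, $\Ho = \ker\theta$ the horizontal bundle with its complex structure $J$, and let $L$ be the sub-Laplacian, a locally subelliptic, non-positive operator with $L1=0$ that is symmetric with respect to the Webster volume measure. Then $\Gamma(f) = |\nh f|^2$ is the horizontal carré du champ, and the natural choice for the complementary form is $\Gamma^Z(f) = (Zf)^2$, the square of the derivative along the Reeb (vertical) direction. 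With these identifications the objects $\Gamma_2$ and $\Gamma_2^Z$ appearing in \eqref{CDi} are the iterated forms built from $\Gamma$ and $\Gamma^Z$ as in the general setup of Section~\ref{S:background}.

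The heart of the proof is a sub-Riemannian Bochner computation for the Tanaka--Webster connection $\nabla$. Because the Sasakian hypothesis forces the Webster torsion to vanish, the Weitzenb\"ock identity for $\nabla$ simplifies substantially: tracing the horizontal Hessian yields a lower bound of the schematic form $\Gamma_2(f) \ge \frac{1}{2n}(Lf)^2 + \ri(\nh f,\nh f) + (\text{coupling terms})$, where the coupling terms involve mixed horizontal--vertical second derivatives together with the structure constants of the contact form. The companion form $\Gamma_2^Z(f)$, computed along the Reeb direction, supplies a positive multiple of $\Gamma^Z(f)$ together with the same mixed-derivative terms carrying the opposite sign. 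Forming the combination $\Gamma_2(f) + \nu\,\Gamma_2^Z(f)$ and completing the square in the mixed terms, the bad cross terms are absorbed at the cost of the $-\frac{\ka}{\nu}\Gamma(f)$ correction; invoking the assumed bound $\ri(\nh f,\nh f) \ge \rho_1 |\nh f|^2 = \rho_1\,\Gamma(f)$ then produces \eqref{CDi} with curvature parameter exactly $\rho_1 \ge 0$, dimension $d = 2n$, and constants $\rho_2 > 0$ and $\ka \ge 0$ fixed by the geometry. This is precisely the verification carried out in \cite{BG1}, to which I would refer for the detailed tensorial identities.

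The main obstacle is the control of the mixed horizontal--vertical second-derivative terms in the Bochner identity: these are exactly the terms responsible for the coupling between $\Gamma_2$ and $\Gamma_2^Z$, and it is their estimation --- via Cauchy--Schwarz with the free parameter $\nu$ --- that both forces the appearance of the $-\frac{\ka}{\nu}$ correction and determines the admissible value of $\rho_2$. Once \eqref{CDi} is established with $\rho_1 \ge 0$, the conclusion is immediate: the general hypotheses of Section~\ref{S:background} are met, so Theorems~\ref{T:riesz12} and~\ref{T:riesz2infty} apply and give, through Theorem~\ref{T:equivalence}, the $L^p$-boundedness of the Riesz transform $\sqrt{\Gamma((-L)^{-1/2}\,\cdot\,)}$ for every $1<p<\infty$, which is the assertion of Theorem~\ref{T:sasakian}.
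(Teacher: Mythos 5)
Your proposal follows exactly the route the paper takes: the paper proves Theorem~\ref{T:sasakian} simply by citing \cite{BG1} for the verification that a Sasakian manifold with nonnegative horizontal Tanaka--Webster Ricci curvature satisfies CD$(\rho_1,\rho_2,\kappa,d)$ with $\rho_1\ge 0$ (together with the remaining hypotheses of Section~\ref{S:background}), and then invoking Theorems~\ref{T:riesz12} and~\ref{T:riesz2infty}. Your sketch of the Bochner-type computation with $\Gamma^Z(f)=(Zf)^2$ along the Reeb direction and $d=2n$ is a correct outline of what \cite{BG1} actually does, so the proposal is correct and essentially identical in approach.
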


In connection with Theorem \ref{T:sasakian} we mention that it was proved in \cite{BG1} that in the framework of Theorem \ref{T:sasakian} the generalized curvature-dimension inequality CD$(\rho_1,\rho_2,\kappa,d)$ does hold with $\rho_1 \ge 0$. Thus these manifolds fall within the scope of the assumptions in Section \ref{S:background}.

\

In closing, we mention some known partial results related to those in the present paper. In  \cite{LV} the boundedness of the Riesz transforms was proved on every stratified nilpotent Lie group. In \cite{A} this result was generalized to Lie groups of polynomial growth. 

\medskip

\noindent \textbf{Acknowledgment:} The second named author would like to thank Steve Hofmann for  several helpful discussions.
 
\bigskip

\section{Background}\label{S:background}

\subsection{Assumptions}\label{SS:frame}

Hereafter in this paper, $\bM$ will be a $C^\infty$ connected and non-compact manifold endowed with a smooth measure $\mu$. Throughout the paper, the notation $L^p(\M)$, $1\le p\le \infty$, indicates the space of $p$-summable functions on $\M$ with respect to the measure $\mu$. 

We assume that on $M$ a second-order diffusion operator $L$ with real coefficients is given. We also suppose that $L$ be locally subelliptic (for the relevant definition and properties of such operators see \cite{FSC} and \cite{JSC}), and that it satisfy:
\begin{itemize}
\item[1)] $L1=0$;
\item[2)] $\int_\bM f L g d\mu=\int_\bM g Lf d\mu$;
\item[3)] $\int_\bM f L f d\mu \le 0$,
\end{itemize}
for every $f , g \in C^ \infty_0(\bM)$. 

There is a natural gradient (or rather, a natural square of the length of a gradient) canonically associated with $L$, and it is given by the quadratic functional $\Gamma(f) = \Gamma(f,f)$, where
\begin{equation}\label{gamma}
\Gamma(f,g) =\frac{1}{2}(L(fg)-fLg-gLf), \quad f,g \in C^\infty(\bM).
\end{equation}
 The functional $\Gamma(f)$ is known as \textit{le carr\'e du champ}. Notice that $\Gamma(1) = 0$. Furthermore,
using the results in \cite{PS}, locally in the neighborhood of every point $x\in \M$ we can write
\begin{equation}\label{Lrep}
L =- \sum_{i=1}^m X_i^* X_i,
\end{equation}
where the vector fields $X_i$ are Lipschitz continuous (such representation is not unique, but this fact is of no consequence for us. We note for further reference that the number $m$ of vector fields entering in the local representation \eqref{Lrep} is bounded above by the dimension of the manifold $\M$). Therefore, for any $x\in \M$ there exists an open neighborhood $U_x$ such that in $U_x$ we have for any $f\in C^\infty(\M)$
\begin{equation}\label{Grep}
\Gamma(f)  = \sum_{i=1}^m (X_i f)^2.
\end{equation}
This shows that $\Gamma(f)\ge 0$ and it actually only involves differentiation of order one. Furthermore, the value of $\Gamma(f)(x)$ does not depend  on the particular representation \eqref{Lrep} of $L$.
With the operator $L$ we can also associate a canonical distance:
\begin{equation}\label{di}
d(x,y)=\sup \left\{ |f(x) -f(y) | \mid f \in  C^\infty(\bM) , \| \Gamma(f) \|_\infty \le 1 \right\},\ \ \  \ x,y \in \bM,
\end{equation}
where for a function $g$ on $\bM$ we have let $||g||_\infty = \underset{\bM}{\text{ess} \sup} |g|$. 

\

A tangent vector $v\in T_x\M$ is called \emph{subunit} for $L$ at $x$ if   
$v = \sum_{i=1}^m a_i X_i(x)$, with $\sum_{i=1}^m a_i^2 \le 1$, see \cite{FP}. It turns out that the notion of subunit vector for $L$ at $x$ does not depend on the local representation \eqref{Lrep} of $L$. A Lipschitz path $\gamma:[0,T]\to \M$ is called subunit for $L$ if $\gamma'(t)$ is subunit for $L$ at $\gamma(t)$ for a.e. $t\in [0,T]$. We then define the subunit length of $\gamma$ as $\ell_s(\gamma) = T$. Given $x, y\in \M$, we indicate with 
\[
S(x,y) =\{\gamma:[0,T]\to \M\mid \gamma\ \text{is subunit for}\ L, \gamma(0) = x,\ \gamma(T) = y\}.
\]
In this paper we assume that 
\[
S(x,y) \not= \varnothing,\ \ \ \ \text{for every}\ x, y\in \M.
\]
Under such assumption  it is easy to verify that
\begin{equation}\label{ds}
d_s(x,y) = \inf\{\ell_s(\gamma)\mid \gamma\in S(x,y)\},
\end{equation}
defines a true distance on $\M$. Furthermore, thanks to Lemma 5.43 in \cite{CKS} we know that
\[
d(x,y) = d_s(x,y),\ \ \ x, y\in \mathbb M,
\]
hence we can work indifferently with either one of the distances $d$
or $d_s$.

\

In addition to the differential form \eqref{gamma}, we assume that $\M$ be endowed with another smooth bilinear differential form, indicated with $\Gamma^Z$, satisfying for $f,g \in C^\infty(\M)$
\begin{equation}\label{gammaZ}
\Gamma^Z(fg,h) = f\Gamma^Z(g,h) + g \Gamma^Z(f,h),
\end{equation}
and $\Gamma^Z(f) = \Gamma^Z(f,f) \ge 0$. 
Given the first-order bilinear forms $\Gamma$ and $\Gamma^Z$ on $\bM$, we now introduce the following second-order differential forms:
\begin{equation}\label{gamma2}
\Gamma_{2}(f,g) = \frac{1}{2}\big[L\Gamma(f,g) - \Gamma(f,
Lg)-\Gamma (g,Lf)\big],
\end{equation}
\begin{equation}\label{gamma2Z}
\Gamma^Z_{2}(f,g) = \frac{1}{2}\big[L\Gamma^Z (f,g) - \Gamma^Z(f,
Lg)-\Gamma^Z (g,Lf)\big].
\end{equation}
Observe that if $\Gamma^Z\equiv 0$, then $\Gamma^Z_2 \equiv 0$ as well. As for $\Gamma$ and $\Gamma^Z$, we will use the notations  $\Gamma_2(f) = \Gamma_2(f,f)$, $\Gamma_2^Z(f) = \Gamma^Z_2(f,f)$.

We make the following assumptions that will be in force throughout the paper:

\

\begin{itemize}
\item[(H.1)] There exists an increasing
sequence $h_k\in C^\infty_0(\bM)$   such that $h_k\nearrow 1$ on
$\bM$, and \[
||\Gamma (h_k)||_{\infty} +||\Gamma^Z (h_k)||_{\infty}  \to 0,\ \ \text{as} \ k\to \infty.
\]
\item[(H.2)]  
For any $f \in C^\infty(\bM)$ one has
\[
\Gamma(f, \Gamma^Z(f))=\Gamma^Z( f, \Gamma(f)).
\]
\item[(H.3)] The  \emph{generalized curvature-dimension inequality} \emph{CD}$(\rho_1,\rho_2,\kappa,d)$ be satisfied with 
$\rho_1 \ge 0$, that is: There exist constants $\rho_1 \ge 0$,  $\rho_2 >0$, $\kappa \ge 0$, and $d\ge 2$ such that the inequality 
\begin{equation}\label{CD}
\Gamma_2(f) +\nu \Gamma_2^Z(f) \ge \frac{1}{d} (Lf)^2 +\left( \rho_1 -\frac{\kappa}{\nu} \right) \Gamma(f) +\rho_2 \Gamma^Z(f)
\end{equation}
 hold for every  $f\in C^\infty(\bM)$ and every $\nu>0$, where $\Gamma_2$ and $\Gamma_2^Z$ are defined by \ref{gamma2} and \ref{gamma2Z}. 
 \end{itemize}
 \ 

For example,  the assumptions (H.1)-(H.3) are satisfied in all Carnot groups of step two, and in all complete Sasakian manifolds whose horizontal Tanaka-Webster Ricci curvature is non negative. For further examples, including a wide class of bundles over Riemannian manifolds we refer the reader to \cite{BG1}.
 
In this framework:
\begin{itemize}
\item $L$ is essentially self-adjoint on $C^ \infty_0(\bM)$, so that by using the spectral theorem for the Friedrichs extension of $L$ in the Hilbert space $L^2 (\mathbb{M})$, we may construct a strongly continuous contraction semigroup $(P_t)_{t \ge 0}$ in $L^2 (\mathbb{M})$ whose infinitesimal generator is $L$;
\item By  hypoellipticity of $L$, $(P_t)_{t \ge 0}$ admits a heat kernel, that is: There is a smooth function $p(t,x,y)$, $t \in (0,\infty), x,y \in \mathbb{M}$, such that for every $f \in L^2 (\mathbb{M})$ and $x \in \mathbb{M}$ ,
\[
P_t f (x)=\int_{\mathbb{M}} p(t,x,y) f(y) d\mu (y).
\]
 Moreover, the heat kernel satisfies the two following conditions:
\begin{itemize}
\item[(i)] (Symmetry) $p(t,x,y)=p(t,y,x)$;
\item[(ii)] (Chapman-Kolmogorov relation) $p(t+s,x,y)=\int_{\mathbb{M}} p(t,x,z)p(s,z,y)d\mu(z)$; 
\end{itemize}
\item The semigroup  $(P_t)_{t \ge 0}$ is a sub-Markovian semigroup: If $ 0\le f \le 1$ is a function in $L^2 (\mathbb{M})$, then $0 \le P_t f \le 1$;
\item By the  Riesz-Thorin interpolation theorem, $(P_t)_{t \ge 0}$ defines a contraction semigroup on $L^p (\mathbb{M})$, $1 \le p \le \infty$.
\end{itemize} 

\subsection{Some known results}

In this section we collect some results that, under the above listed assumptions, were proved in the works \cite{BG1} and \cite{BBG}. Such results constitute the backbone of the present paper.

The first basic result is that the manifold $\M$ is \emph{stochastically complete} with respect to $L$, i.e., for every $t>0$
\begin{equation}\label{sc}
P_t 1 = e^{tL} 1 = 1.
\end{equation}
We recall that this result is equivalent to the uniqueness of the bounded solution of the Cauchy problem
\[
\begin{cases}
Lu - u_t = 0,\ \ \ \ \ \M\times (0,\infty),
\\
u(x,0) = \varphi(x),\ \ \ \ x\in \M,
\end{cases}
\]
with bounded initial datum $\varphi$. The property \eqref{sc} is a corollary of Theorem 3.5 in \cite{BG1}, see also \cite{Mu} for an extension of such result.

\

Another basic result is the following Gaussian upper bound that was proved in \cite{BG1}.

\begin{theorem}\label{T:ugb}
For any $0<\ve <1$
there exists a constant $C(\ve) = C(d,\kappa,\rho_2,\ve)>0$, which tends
to $\infty$ as $\ve \to 0^+$, such that for every $x,y\in \bM$
and $t>0$ one has
\[
 p(x,y,t)\le \frac{C(\ve)}{V(x,\sqrt
t)} \exp
\left(-\frac{d(x,y)^2}{(4+\ve)t}\right).
\]
\end{theorem}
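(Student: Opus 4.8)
The plan is to obtain the Gaussian upper bound as a consequence of a parabolic Li--Yau type gradient estimate, which is in turn extracted from the generalized curvature-dimension inequality \eqref{CD}. Throughout I fix $f \in C^\infty_0(\bM)$ with $f \ge 0$, $f \not\equiv 0$, and write $u = P_t f$, which is smooth and strictly positive by hypoellipticity and the strong maximum principle. The heart of the argument is to show that there exist constants $A, B > 0$, depending only on $d, \ka, \rho_2$ (and \emph{not} on $\rho_1$, since the hypothesis $\rho_1 \ge 0$ only helps), such that
\[
\Gamma(\ln u) \le A\, \frac{L u}{u} + \frac{B}{t} \qquad \text{on } \bM \times (0,\infty).
\]

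First I would derive this estimate by the semigroup interpolation method. Fixing $t>0$, I consider for $0 \le s \le t$ the functional
\[
\phi(s) = P_s\!\left( \Gamma(\ln P_{t-s} f)\, P_{t-s} f \right),
\]
together with a companion functional built from $\Gamma^Z(\ln P_{t-s} f)$, and I form a time-weighted linear combination of the two. Differentiating in $s$ and using the definitions \eqref{gamma2} and \eqref{gamma2Z} of $\Gamma_2$ and $\Gamma_2^Z$, the derivative is bounded below by the left-hand side of \eqref{CD}. The key analytic point is to feed in \eqref{CD} with a time-dependent choice of the free parameter $\nu = \nu(s)$, chosen so as to balance the $-\ka/\nu$ term against the positive $\rho_2\,\Gamma^Z$ contribution; this absorbs the auxiliary form $\Gamma^Z$ and leaves a closed differential inequality for a single scalar quantity. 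Integrating from $0$ to $t$ and discarding the favorable $\rho_1\,\Gamma$ term produces the displayed Li--Yau inequality with $\rho_1$-independent constants. I expect this to be the main obstacle: selecting the correct weight $\nu(s)$, and verifying that (H.1)--(H.2) justify the integrations by parts and the interchange of $P_s$ with $\Gamma$ and $\Gamma^Z$, is delicate, and it is precisely here that the genuinely sub-Riemannian features (the presence of $\Gamma^Z$ and of the constant $\ka$) must be controlled.

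Next I would convert this pointwise estimate into a parabolic Harnack inequality. Reading the Li--Yau inequality as a joint bound on $\Gamma(\ln u)$ and $\partial_t \ln u = (Lu)/u$, I integrate $\ln u$ along a subunit path from $x$ to $y$ of length close to $d(x,y)$, which exists by \eqref{ds}; optimizing the integration over $0<s<t$ gives
\[
u(x,s) \le u(y,t)\, \left( \frac{t}{s} \right)^{D/2} \exp\!\left( \frac{d(x,y)^2}{4(t-s)} \right),
\]
with $D$ depending only on $A,B$. Applying this to the heat kernel, with $s = t/2$ and $x = y$, and combining it with the conservation property \eqref{sc} and the Chapman--Kolmogorov relation, one extracts the on-diagonal bound
\[
p(x,x,t) \le \frac{C(d,\ka,\rho_2)}{V(x,\sqrt t)}.
\]

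Finally I would upgrade the on-diagonal bound to the full off-diagonal Gaussian estimate by Davies' integrated maximum principle. For a bounded Lipschitz function $\psi$ with $\Gamma(\psi) \le 1$, the weighted energy $\int_\bM (P_t g)^2 e^{2\alpha \psi}\, d\mu$ satisfies a differential inequality whose solution yields a Gaffney type off-diagonal $L^2$ decay of order $\exp(-d(x,y)^2/(4t))$. Feeding this Davies--Gaffney bound back into the on-diagonal estimate through the semigroup property, relaxing the exponent by replacing $4$ with $4+\ve$, and using the volume doubling \eqref{dci} to pass from a symmetric denominator $V(x,\sqrt t)^{1/2} V(y,\sqrt t)^{1/2}$ to $V(x,\sqrt t)$, gives exactly the asserted inequality. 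The constant $C(\ve)$ necessarily blows up as $\ve \to 0^+$ because the exponent produced by Davies' method degenerates in that limit.
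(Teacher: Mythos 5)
Your outline is essentially the proof of this result: the paper itself does not prove Theorem \ref{T:ugb} but quotes it from \cite{BG1}, whose argument proceeds exactly as you describe (a generalized Li--Yau inequality obtained by semigroup interpolation from \eqref{CD} with a time-dependent choice of $\nu$ absorbing the $\Gamma^Z$ terms, then a parabolic Harnack inequality along subunit paths giving the on-diagonal bound, then the off-diagonal Gaussian decay with exponent $(4+\ve)$ via the integrated Davies--Gaffney argument, with doubling used only at the end to replace $V(x,\sqrt t)^{1/2}V(y,\sqrt t)^{1/2}$ by $V(x,\sqrt t)$). You correctly identify the one genuinely delicate step, namely the choice of the weight $\nu(s)$ balancing $-\ka/\nu$ against $\rho_2\,\Gamma^Z$, which is where \cite{BG1} does the real work.
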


In the paper \cite{BBG} it has been proved that the metric measure space $(\bM,d,\mu)$ satisfies the global volume doubling property and that, furthermore, the $L^2$  Poincar\'e inequality is satisfied on balls. More precisely, we have the following result. 

\begin{theorem}\label{T:main}
There exist constants $C_d, C_p>0$, depending only on $\rho_1, \rho_2, \kappa, d$, for which one has for every $x\in \M$ and every $r>0$:
\begin{equation}\label{dcsr}
V(x,2r) \le C_d\ V(x,r);
\end{equation}
\begin{equation}\label{pisr}
\int_{B(x,r)} |f - f_B|^2 d\mu \le C_p r^2 \int_{B(x,r)} \Gamma(f)  d\mu,
\end{equation}
for every $f\in C^1(\overline B(x,r))$.
\end{theorem}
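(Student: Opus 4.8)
The plan is to obtain both \eqref{dcsr} and \eqref{pisr} simultaneously from a single scale-invariant \emph{parabolic} Harnack inequality for the semigroup $P_t = e^{tL}$, which in turn is extracted from the curvature-dimension inequality \eqref{CD}. The structural fact I would exploit is the Grigor'yan--Saloff-Coste--Sturm characterization: on a complete, strongly local metric measure Dirichlet space, the scale-invariant parabolic Harnack inequality is equivalent to the conjunction of volume doubling and the $L^2$ Poincar\'e inequality on balls. Since $(\M,d)$ is assumed complete, $\mu$ is a smooth (hence Radon) measure, the Dirichlet form associated with $L$ is a strongly local diffusion, and the distance \eqref{di} coincides with the intrinsic/subunit metric $d_s$ (by Lemma 5.43 in \cite{CKS}), all hypotheses of that characterization are in force. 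It therefore suffices to produce the parabolic Harnack inequality.

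First I would derive a generalized Li--Yau gradient estimate for positive solutions $u = P_t\varphi$ of $Lu - u_t = 0$. The idea is the Bochner/entropy method: apply \eqref{CD} to $f = \ln u$ and run a semigroup-interpolation argument of the form $s \mapsto P_s\big(\phi(t-s)\,\Gamma(\ln u) + \psi(t-s)\,\Gamma^Z(\ln u)\big)$ with well-chosen time-dependent weights $\phi,\psi$, optimizing over the free parameter $\nu$ in \eqref{CD}. Because $\rho_1 \ge 0$ the Ricci-type term is favorably signed, and the coupling term $-\kappa/\nu$ can be absorbed by taking $\nu$ of order $t$; this yields an estimate of Li--Yau type controlling $\Gamma(\ln u)$ and $L(\ln u)$ by a multiple of $1/t$ with dimensional constant $d$. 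This is precisely the estimate established in \cite{BG1}, so I would cite it rather than reprove it.

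Next, integrating the Li--Yau estimate along a subunit space-time curve joining $(x,s)$ to $(y,\tau)$ with $s<\tau$ (legitimate since $d = d_s$) produces the scale-invariant parabolic Harnack inequality
\[
\sup_{Q^-} u \le C \inf_{Q^+} u
\]
for nonnegative solutions on a cylinder $Q = B(x_0,r)\times(0,r^2)$, with $Q^-,Q^+$ the usual sub-cylinders and $C$ independent of $x_0$ and $r$. Feeding this into the equivalence cited above delivers both \eqref{dcsr} and \eqref{pisr} at once. Alternatively, for the doubling inequality alone one can argue more directly: the Harnack inequality applied to the heat kernel, combined with stochastic completeness \eqref{sc} to localize its mass and the Gaussian upper bound of Theorem \ref{T:ugb}, yields matching two-sided on-diagonal bounds $c/V(x,\sqrt t) \le p(x,x,t) \le C/V(x,\sqrt t)$ together with a time comparison $p(x,x,t) \le C' p(x,x,2t)$; chaining these forces $V(x,\sqrt 2\,r) \le C_0 V(x,r)$, and iteration gives \eqref{dcsr}.

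The main obstacle is the first step: extracting a genuinely \emph{scale-invariant} parabolic Harnack inequality from \eqref{CD}. Unlike the Riemannian case, \eqref{CD} couples the two quadratic forms $\Gamma$ and $\Gamma^Z$ through the constant $\kappa$, so the naive Li--Yau computation does not close, and one must carry the auxiliary form $\Gamma^Z(\ln u)$ through the interpolation and tune the weights $\phi,\psi$ and the parameter $\nu$ so that the resulting differential inequality has the correct homogeneity in $t$. Justifying the interpolation identities and controlling the boundary/integrability terms (legitimate thanks to the exhaustion hypothesis (H.1) and the commutation identity (H.2)) is the delicate part; everything downstream — the passage from Li--Yau to Harnack and from Harnack to \eqref{dcsr}--\eqref{pisr} — is then standard parabolic theory.
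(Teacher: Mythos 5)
A preliminary remark on the comparison: the paper does not prove Theorem \ref{T:main} at all --- it is imported verbatim from \cite{BBG} --- so the relevant benchmark is the argument of that paper. Your overall architecture (generalized curvature-dimension inequality $\Rightarrow$ Li--Yau estimate $\Rightarrow$ Harnack inequality $\Rightarrow$ doubling and Poincar\'e) is indeed the architecture of \cite{BBG}, and your remarks about why the Li--Yau step is delicate (the coupling of $\Gamma$ and $\Gamma^Z$ through $\kappa$, the need to carry $\Gamma^Z(\ln u)$ through the semigroup interpolation and tune the weights and the parameter $\nu$) are accurate; that step is carried out in \cite{BG1} and you are right to cite it rather than reprove it.

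There is, however, one genuine gap in the way you close the argument. The Grigor'yan--Saloff-Coste--Sturm equivalence you invoke is between (volume doubling $+$ Poincar\'e) and the \emph{local} scale-invariant parabolic Harnack inequality, i.e. the one valid for every nonnegative local solution of $Lu=u_t$ on a cylinder $B(x_0,r)\times(0,r^2)$. The Li--Yau/semigroup method does not produce that: integrating the Li--Yau estimate along a space-time path yields a Harnack inequality only for globally defined positive solutions of the form $u=P_t\varphi$, since the interpolation $s\mapsto P_s(\cdots)$ makes essential use of the global semigroup. You therefore cannot feed the resulting Harnack inequality into the equivalence to extract the Poincar\'e inequality. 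The repair --- and what \cite{BBG} actually does --- is to route through heat kernel bounds instead: the global Harnack inequality applied to $p(x,\cdot,\cdot)$, together with stochastic completeness \eqref{sc}, gives the on-diagonal lower bound $p(x,x,t)\ge c/V(x,\sqrt t)$; combined with the upper bound \eqref{gue} this yields \eqref{dcsr} (essentially your ``alternative'' paragraph, which is in fact the main route, not an alternative); one then uses the other leg of the Grigor'yan--Saloff-Coste--Sturm circle, namely that volume doubling together with \emph{two-sided Gaussian heat kernel bounds} is equivalent to volume doubling plus the $L^2$ Poincar\'e inequality on balls, to obtain \eqref{pisr}. With that substitution your outline is correct; as written, the step ``feeding this into the equivalence delivers both \eqref{dcsr} and \eqref{pisr} at once'' does not go through.
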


We list for future use the following well-known consequence of the doubling condition \eqref{dcsr}.

\begin{corollary}\label{C:dc}
With $C_d$ as in \eqref{dcsr} define
\[
Q = \log_2 C_d.
\]
Then, for every $x\in \M$ and any $0<r<R<\infty$ one has
\begin{equation}\label{dcallscales}
V(x,R) \le C_d \left(\frac Rr\right)^Q V(x,r).
\end{equation}
In particular, if $y, z\in \M$ and $t>0$ we have
\begin{equation}\label{dct}
V(y,\sqrt{ t}) \leq C_d \left(\frac {d(y,z)}{\sqrt t} + 1\right)^Q V(z,\sqrt{ t}), 
\end{equation}
and also for any given $\alpha>0$, there exists a constant $C>0$ depending on $C_d$ and $\alpha$, such that 
\begin{equation}\label{exp}
\int_\M \exp\left(-\alpha \frac{d(y,z)^2}{t}\right) d\mu(z) \le C V(y,\sqrt t).
\end{equation}
\end{corollary}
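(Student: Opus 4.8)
The plan is to treat the three inequalities in succession, each following from its predecessors by elementary manipulations of the doubling condition \eqref{dcsr} together with the definition $Q = \log_2 C_d$.

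For \eqref{dcallscales} I would simply iterate \eqref{dcsr}. Given $0<r<R$, let $k$ be the smallest integer with $2^k r \ge R$; by minimality one has $2^{k-1} r < R$, hence $2^k < 2R/r$. Applying \eqref{dcsr} a total of $k$ times yields $V(x,R) \le V(x,2^k r) \le C_d^{\,k} V(x,r)$. Since $C_d = 2^Q$, I would rewrite $C_d^{\,k} = (2^k)^Q < (2R/r)^Q = C_d (R/r)^Q$, which gives \eqref{dcallscales}.

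For \eqref{dct} I would first invoke the triangle inequality for $d$ to observe the inclusion $B(y,\sqrt t) \subseteq B(z,\sqrt t + d(y,z))$, so that $V(y,\sqrt t) \le V(z,\sqrt t + d(y,z))$. I would then apply \eqref{dcallscales} at the center $z$ with $r = \sqrt t$ and $R = \sqrt t + d(y,z)$, bounding the right-hand side by $C_d \left(1 + d(y,z)/\sqrt t\right)^Q V(z,\sqrt t)$, which is precisely \eqref{dct}.

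The last inequality \eqref{exp} is the only step requiring genuine care, and I would establish it by a dyadic annular decomposition of $\M$ centered at $y$. Setting $A_0 = B(y,\sqrt t)$ and $A_j = \{z \in \M : 2^{j-1}\sqrt t \le d(y,z) < 2^j \sqrt t\}$ for $j \ge 1$, I would bound the integrand on $A_j$ by $\exp(-\alpha 4^{j-1})$ (since there $d(y,z)^2/t \ge 4^{j-1}$) and the measure of $A_j$ by $V(y,2^j\sqrt t) \le C_d\, 2^{jQ} V(y,\sqrt t)$, the latter coming from \eqref{dcallscales}. Summing over $j$ then produces
\[
\int_\M \exp\left(-\alpha \frac{d(y,z)^2}{t}\right) d\mu(z) \le V(y,\sqrt t)\left(1 + C_d \sum_{j=1}^\infty 2^{jQ} e^{-\alpha 4^{j-1}}\right).
\]
The crucial (and essentially only) point is the convergence of the series: the super-exponentially decaying factor $e^{-\alpha 4^{j-1}}$ dominates the geometric growth $2^{jQ}$, so the sum is finite and depends only on $C_d$ (through $Q$ and the prefactor) and on $\alpha$. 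This yields the constant $C = C(C_d,\alpha)$ and completes the argument.
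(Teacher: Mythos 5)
Your proposal is correct and follows essentially the same route as the paper: iterate the doubling condition for \eqref{dcallscales}, apply it with $R=d(y,z)+\sqrt t$, $r=\sqrt t$ centered at $z$ for \eqref{dct}, and decompose $\M$ into dyadic annuli around $y$ for \eqref{exp}. The paper merely sketches these steps (omitting the first as standard), so your write-up is a faithful, more detailed version of the intended argument.
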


\begin{proof}
The proof of \eqref{dcallscales} is standard and we omit it. As for \eqref{dct} it is enough to apply \eqref{dcallscales} with $R = d(y,z) + \sqrt t$ and $r = \sqrt t$, to obtain
\[
V(y,\sqrt{ t}) \le V(z,d(y,z)+\sqrt{ t})
                 \leq C_d  \left(\frac {d(y,z)+\sqrt{ t}}{\sqrt t} \right)^Q V(z,\sqrt{ t}) .
\]
Finally, \eqref{exp} easily follows by covering $\M$ with dyadic rings $B(y,2^{k+1}\sqrt t)\setminus B(y,2^k \sqrt t)$, and then using \eqref{dcsr}.
\end{proof}

\section{Riesz transform}\label{S:RT}

Our objective in this section is proving that the Riesz  transform associated to $L$ is bounded in $L^p(\bM)$. \footnote{In the case where $\mu(\bM) <\infty$ one has to consider the space $L_0^p(\bM)$  of functions in $L^p(\bM)$ with mean 0, see \cite{ACDH}; this modification will be implicit in the text.}

\begin{theorem}\label{T:riesz}
Let for every $1<p<\infty$. There is a constant $C_p >0$ such that for every $f \in L^p(\M)$,
\begin{equation}\label{rtp}
\left\| \sqrt{\Gamma((-L)^{-1/2}f)} \right\|_{L^p(\M)} \le C_p \| f \|_{L^p(\M)}.
\end{equation}
\end{theorem}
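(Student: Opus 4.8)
The plan is to derive Theorem~\ref{T:riesz} simply by combining the two preceding results, Theorem~\ref{T:riesz12} (the range $1<p\le 2$) and Theorem~\ref{T:riesz2infty} (the range $2\le p<\infty$), which together cover the full interval $1<p<\infty$. Since the sublinear operator $Tf = \sqrt{\Gamma((-L)^{-1/2}f)}$ is bounded on $L^p(\M)$ for $1<p\le 2$ by the first theorem and for $2\le p<\infty$ by the second, and these two ranges overlap at $p=2$, the estimate \eqref{rtp} holds for every $1<p<\infty$ on the dense subspace $C^\infty_0(\M)$.

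The only genuine point requiring attention is the passage from $f\in C^\infty_0(\M)$ to general $f\in L^p(\M)$, since the statement of Theorem~\ref{T:riesz} is phrased for all $L^p$ functions while the two constituent theorems are stated for test functions. First I would invoke the density of $C^\infty_0(\M)$ in $L^p(\M)$ for $1<p<\infty$ together with the uniform bound $\|Tf\|_p \le C_p\|f\|_p$ on $C^\infty_0(\M)$ to extend $T$, or rather the linear Riesz operator $\mathcal R = \sqrt{\Gamma}\,(-L)^{-1/2}$ of which $T$ is the pointwise norm, continuously to all of $L^p(\M)$. One takes an approximating sequence $f_n\to f$ in $L^p$ with $f_n\in C^\infty_0(\M)$, notes that $(\mathcal R f_n)$ is Cauchy in the appropriate $L^p$ space of vector-valued (or $\Gamma$-valued) objects by the uniform estimate, and defines $\mathcal R f$ as the limit; the bound then passes to the limit by continuity of the norm.

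The main obstacle, to the extent there is one, is bookkeeping rather than mathematics: one must make precise the sense in which $\sqrt{\Gamma(\cdot)}$ is interpreted for $L^p$ functions that need not be smooth, since $\Gamma$ is defined \emph{a priori} on $C^\infty(\M)$. This is handled by working with the sublinear operator $T$ at the level of the quadratic form and using the $L^2$ identity $\|T f\|_2 = \|f\|_2$ (from integration by parts and $(Lf,f)=\|(-L)^{1/2}f\|_2$) to anchor the construction, exactly as in \cite{ACDH}; the pointwise modulus $\sqrt{\Gamma((-L)^{-1/2}f)}$ is then the one inherited from this extension. With these identifications in place, the theorem follows at once from Theorems~\ref{T:riesz12} and \ref{T:riesz2infty} and a standard density argument, so I expect the proof to be essentially one paragraph.
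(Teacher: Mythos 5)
Your argument is circular in the context of this paper's logical structure. Theorems \ref{T:riesz12} and \ref{T:riesz2infty} are stated in the introduction, but they are not established there as independent prior results: the introduction explicitly defers the proof of Theorem \ref{T:riesz2infty} to the ingredients ``established below,'' namely the Caccioppoli estimates (Proposition \ref{P:caccioppoli} and Corollary \ref{C:caccioppoli}), the Gaffney-type Lemmas \ref{L:gaf1}--\ref{L:gaf3}, and the pointwise bound on $\sqrt{t}\sqrt{\Gamma(P_t f)}$. Those results \emph{are} the paper's proof of Theorem \ref{T:riesz} in the range $p\ge 2$; Theorem \ref{T:riesz} is simply the union of the two introductory statements, restated in Section \ref{S:RT} so that its proof can be supplied there. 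Invoking Theorem \ref{T:riesz2infty} to prove Theorem \ref{T:riesz} therefore assumes exactly what is to be proved, and your proposal supplies none of the actual mathematical content.

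Concretely, what is missing is the following. For $1<p\le 2$ one needs to check that the hypotheses of Theorem 1.1 of \cite{CD} hold, i.e., the off-diagonal Gaussian upper bound of Theorem \ref{T:ugb} together with the volume doubling property \eqref{dcsr}; this yields the weak-$(1,1)$ bound for the sublinear operator $T$, and Marcinkiewicz interpolation against the trivial $L^2$ identity $\|Tf\|_2=\|f\|_2$ finishes that range. For $p>2$ one must verify the hypotheses of \cite{ACDH}: stochastic completeness, doubling, the Poincar\'e inequality \eqref{pisr} (all from \cite{BG1}, \cite{BBG}), and then the genuinely new estimates of this paper --- the Caccioppoli inequality $\Gamma(P_tf)+\rho_2 t\,\Gamma^Z(P_tf)\le \frac{1+2\kappa/\rho_2}{2t}\left(P_t(f^2)-(P_tf)^2\right)$ obtained from the curvature-dimension inequality \eqref{CD} via a parabolic comparison argument, its $L^\infty$ consequence \eqref{Linfty}, and the three Gaffney estimates proved by the exponential-weight method and the analyticity of the semigroup. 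Your remarks about extending from $C^\infty_0(\M)$ to $L^p(\M)$ by density are correct but peripheral; the substance of the proof lies in establishing these semigroup estimates, and none of them appear in your write-up.
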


As a consequence of Theorem \ref{T:riesz}, we obtain the following result.

\begin{theorem}\label{T:equivalence2}
Let $1<p<\infty$. There exist constants $A_p, B_p>0$ such that 
\begin{equation}\label{RZsr}
A_p \| (-L)^{1/2} f \|_p \le \| \sqrt{\Gamma(f)} \|_p \le B_p \| (-L)^{1/2} f \|_p,\ \ \ \ \ f \in C_0^\infty(\bM).
\end{equation}
\end{theorem}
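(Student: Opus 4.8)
The plan is to derive both inequalities in \eqref{RZsr} from the Riesz transform bound of Theorem \ref{T:riesz}, which we may now use in the \emph{full} range $1<p<\infty$. The right-hand inequality is essentially a restatement. Given $f \in C_0^\infty(\bM)$, set $g = (-L)^{1/2} f$, so that $f = (-L)^{-1/2} g$ and hence $\sqrt{\Gamma(f)} = \sqrt{\Gamma((-L)^{-1/2} g)}$. Applying Theorem \ref{T:riesz} to $g$ then gives
\[
\| \sqrt{\Gamma(f)} \|_p = \big\| \sqrt{\Gamma((-L)^{-1/2} g)} \big\|_p \le C_p \|g\|_p = C_p \,\|(-L)^{1/2} f\|_p,
\]
so one may take $B_p = C_p$.

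For the left-hand inequality I would argue by duality, passing to the conjugate exponent $p'$. The starting point is the first-order integration-by-parts identity: from the definition \eqref{gamma} of $\Gamma$, the symmetry assumption 2) of Section \ref{S:background}, and $L1=0$, one has for $f,h \in C_0^\infty(\bM)$
\[
\int_\bM \Gamma(f,h)\, d\mu = ((-L)f, h),
\]
where $(\cdot,\cdot)$ denotes the $L^2(\bM)$ inner product. Given $f \in C_0^\infty(\bM)$ and a test function $g \in C_0^\infty(\bM)$, I set $h = (-L)^{-1/2} g$ and use the self-adjointness of $(-L)^{1/2}$ together with $(-L)^{1/2}(-L)^{-1/2} = I$ to write
\[
((-L)^{1/2} f, g) = ((-L) f, (-L)^{-1/2} g) = \int_\bM \Gamma\big(f, (-L)^{-1/2} g\big)\, d\mu.
\]
Since $\Gamma$ is a nonnegative symmetric bilinear form — indeed locally $\Gamma(f,h) = \sum_{i} (X_i f)(X_i h)$ by \eqref{Grep} — the pointwise Cauchy-Schwarz inequality yields $|\Gamma(f, (-L)^{-1/2} g)| \le \sqrt{\Gamma(f)}\,\sqrt{\Gamma((-L)^{-1/2} g)}$. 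Combining this with H\"older's inequality in the exponents $p, p'$ and then the Riesz bound of Theorem \ref{T:riesz} applied in $L^{p'}$ to $g$, I obtain
\[
|((-L)^{1/2} f, g)| \le \big\|\sqrt{\Gamma(f)}\big\|_p \, \big\|\sqrt{\Gamma((-L)^{-1/2} g)}\big\|_{p'} \le C_{p'} \big\|\sqrt{\Gamma(f)}\big\|_p \, \|g\|_{p'}.
\]
Taking the supremum over $g \in C_0^\infty(\bM)$ with $\|g\|_{p'} \le 1$ and invoking the density of $C_0^\infty(\bM)$ in $L^{p'}(\bM)$ gives $\|(-L)^{1/2} f\|_p \le C_{p'} \|\sqrt{\Gamma(f)}\|_p$, so that $A_p = C_{p'}^{-1}$ works.

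The duality scheme above is the classical one, and the genuine difficulty is not conceptual but lies in making the operator manipulations rigorous rather than merely formal. Specifically, one must justify that $(-L)^{-1/2} g$ is well-defined and lies in a domain on which both the integration-by-parts identity and the pointwise Cauchy-Schwarz inequality are legitimate, and that $(-L)^{1/2}$ is self-adjoint with $C_0^\infty(\bM)$ a core; these facts rest on the essential self-adjointness of $L$ recorded in Section \ref{S:background} and on the spectral calculus. I expect this to be the main obstacle. The standard way to bypass the domain issues is to regularize via the heat semigroup, writing $(-L)^{-1/2}$ up to a normalizing constant as $\int_0^\infty e^{tL}\,\frac{dt}{\sqrt t}$ and approximating, using the Gaffney-type and semigroup estimates supplied by ingredients 4)--6); once these technical points are settled, the estimates chain together exactly as displayed.
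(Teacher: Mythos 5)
Your proposal is correct and follows exactly the route the paper intends: the right-hand inequality in \eqref{RZsr} is a restatement of Theorem \ref{T:riesz}, and the left-hand inequality is obtained by the classical duality argument (the paper itself only remarks in the introduction that the right-hand inequality in $L^{p'}$ implies the left-hand one in $L^p$, which is precisely the integration-by-parts/Cauchy--Schwarz/H\"older chain you write out). Your closing caveats about domains and regularization via the semigroup are the standard technical points and do not affect the validity of the argument.
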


\subsection{The case $1<p \le 2$}

Following our discussion in the introduction, the boundedness of the Riesz transform on $L^p(\M)$ for $1<p\le 2$ follows by combining Theorem \ref{T:ugb} and \eqref{dcsr} in Theorem \ref{T:main} above with Theorem 1.1 in \cite{CD}. We note explicitly that \eqref{dcsr} implies that $(M,d,\mu)$ is a space of homogeneous type according to \cite{CW}, see also \cite{C}, and so all tools of real analysis are available. From these results the weak-$(1,1)$ continuity of the Riesz transform
\[
\mu(\{x\in \M\mid \sqrt{\Gamma((-L)^{-1/2}f)(x))} >\lambda\}) \le \frac{C}{\lambda} \|f\|_{L^1(\M)},\ \ \ \lambda>0,
\]
can be established as in \cite{CD}. Then, for the range $1<p\le 2$ the inequality \eqref{rtp} follows by applying Marcinckiewicz real interpolation theorem. The reader should notice that the operator
\[
T = \sqrt{\Gamma((-L)^{-1/2}f))}
\]
is a sublinear operator, i.e., $|T(f+g)(x)|\le |Tf(x)| + |Tg(x)|$ for every $f, g$ and a.e. $x\in \M$.

\subsection{The case $p>2$}

Following the general method in the proof of Theorem 3.1 in \cite{ACDH}, to establish the boundedness of the Riesz transform when $p >2$, we need to the ingredients listed as 1)-6) in the introduction. As it was mentioned there the items which are at this point missing are Caccioppoli  and Gaffney type estimates, as well as bounds for $\sqrt t \sqrt{\Gamma(e^{tL})}$. This section is devoted to filling this gap. We begin with establishing the former type of result.

\subsubsection{Caccioppoli type estimates}

In what follows we prove an a priori inequality of Caccioppoli type for the heat semigroup $P_t$. 

\begin{proposition}\label{P:caccioppoli}
Let $f \in C^\infty_0(\bM)$. For $t \ge  0$ we have
\[
  \Gamma(P_t f)  +\rho_2 t  \Gamma^Z(P_t f)\le \frac{1+\frac{2\kappa}{\rho_2} }{2t} (P_t ( f^2) -(P_tf)^2).
 \]
\end{proposition}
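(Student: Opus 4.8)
The plan is to prove the inequality by fixing $t>0$, fixing $f\in C^\infty_0(\bM)$, and introducing the interpolation functional that compares heat flow at the two endpoints $0$ and $t$. For $s\in[0,t]$ set
\[
\phi(s) = P_s\!\left(\Gamma(P_{t-s} f)\right), \qquad \psi(s) = P_s\!\left(\Gamma^Z(P_{t-s} f)\right).
\]
These are the standard auxiliary functions in the Bakry--\'Emery machinery: $\phi$ interpolates between $\Gamma(P_t f)$ at $s=0$ and $P_t(\Gamma(f))$ at $s=t$, and the quantity $P_t(f^2)-(P_t f)^2$ that appears on the right-hand side is exactly $2\int_0^t P_s(\Gamma(P_{t-s}f))\,ds = 2\int_0^t \phi(s)\,ds$, by the well-known identity obtained from differentiating $s\mapsto P_s((P_{t-s}f)^2)$. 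So the right-hand side is, up to the explicit constant, an integral of $\phi$, and the left-hand side is $\phi(0)+\rho_2 t\,\psi(0)$.

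The key computation is to differentiate $\phi$ and $\psi$ in $s$ and feed in the curvature inequality \eqref{CD}. A direct calculation using the definitions \eqref{gamma2} and \eqref{gamma2Z} gives $\phi'(s) = 2\,P_s\!\left(\Gamma_2(P_{t-s}f)\right)$ and $\psi'(s) = 2\,P_s\!\left(\Gamma_2^Z(P_{t-s}f)\right)$, since the $L\Gamma$ term combines with the chain-rule term from $\frac{d}{ds}P_{t-s}$ to reproduce $\Gamma_2$. Applying \eqref{CD} with a parameter $\nu>0$ to be chosen, and discarding the nonnegative $\frac1d(LP_{t-s}f)^2$ term and the $\rho_1\Gamma(P_{t-s}f)\ge 0$ term (here is where $\rho_1\ge0$ is used), we obtain the differential inequality
\[
\phi'(s) + \nu\,\psi'(s) \;\ge\; -\frac{2\kappa}{\nu}\,\phi(s) + 2\rho_2\,\psi(s).
\]
The strategy is then to choose a suitable linear combination $\Phi(s) = \phi(s) + a(s)\,\psi(s)$ with a well-chosen weight, integrate the resulting inequality over $[0,t]$, and discard the nonnegative boundary and $\psi$-contributions so as to bound $\phi(0)+\rho_2 t\,\psi(0)$ from above by $\left(1+\tfrac{2\kappa}{\rho_2}\right)\int_0^t\phi(s)\,ds$, matching the claimed constant $\tfrac{1+2\kappa/\rho_2}{2t}$ after dividing by $t$. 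A convenient route is to take $\nu$ constant and combine the inequality for $\phi+\nu\psi$ with a Gronwall-type estimate; optimizing the constant forces a specific relation between $\nu$ and $\rho_2,\kappa$, and tracking it carefully is what produces the factor $1+\tfrac{2\kappa}{\rho_2}$.

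The main obstacle is twofold. First, the differentiations above are formal, and justifying them rigorously requires knowing that $P_{t-s}f$ and its derivatives lie in the appropriate spaces and that one may interchange $P_s$ with differentiation and with $\Gamma,\Gamma^Z$; this is where the cutoff sequence $h_k$ from (H.1) and the stochastic completeness \eqref{sc} enter, to justify the integrations by parts and the absence of boundary terms at infinity on the noncompact manifold $\bM$. Second, and more delicate analytically, is the bookkeeping of the combination of the two functionals: because \eqref{CD} couples $\Gamma$ and $\Gamma^Z$ through the free parameter $\nu$, one must organize the linear combination and the choice of $\nu$ so that the $\psi$-terms can be dropped with the correct sign while the surviving $\phi$-integral carries precisely the constant $1+\tfrac{2\kappa}{\rho_2}$. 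I expect the sign analysis and the optimization over $\nu$ to be where the real work lies, the rest being the standard semigroup interpolation identities.
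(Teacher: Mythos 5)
Your framework is the right one and matches the paper's in spirit: the interpolation functionals, the identity $\frac{d}{ds}P_s(\Gamma(P_{t-s}f))=2P_s(\Gamma_2(P_{t-s}f))$ (the paper phrases this as $L\phi_1+\partial_t\phi_1=2\Gamma_2(P_{T-t}f)$ and invokes a parabolic comparison principle instead of wrapping in $P_s$, which is equivalent), the use of \eqref{CD} with the $\frac1d(Lf)^2$ and $\rho_1\Gamma$ terms discarded, and the variance identity $P_t(f^2)-(P_tf)^2=2\int_0^t P_s(\Gamma(P_{t-s}f))\,ds$. But the decisive step --- the choice of $\nu$ and of the weights --- is exactly where you stop, and the specific route you propose (constant $\nu$, a combination $\Phi=\phi+a(s)\psi$ with coefficient $1$ on $\phi$, plus Gronwall) cannot produce the stated inequality. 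If the coefficient of $\phi$ does not vanish at $s=t$, then integrating the differential inequality leaves the boundary term $\phi(t)=P_t(\Gamma(f))$ on the wrong side: you get $\Phi(0)\le \Phi(t)+\cdots$, and $P_t(\Gamma(f))$ is nonnegative, not discardable, and not controlled by the variance $P_t(f^2)-(P_tf)^2$. If instead you force the $\psi$-coefficient to vanish while keeping coefficient $1$ on $\phi$ (which requires $\nu(s)=a(s)=2\rho_2(t-s)$), the surviving $\phi$-coefficient $-2\kappa/\nu(s)$ blows up like $(t-s)^{-1}$ and its integral against $\phi$ diverges. A Gronwall estimate with constant $\nu$ only yields a bound of $\Gamma(P_tf)$ by $P_t(\Gamma(f))$, which is a different (and here useless) inequality.

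The paper's resolution is to weight \emph{both} functionals with powers of $(t-s)$ vanishing at the endpoint: $\Phi(s)=\frac{1}{\rho_2}(t-s)\,\phi(s)+(t-s)^2\,\psi(s)$, i.e. $a(s)=\frac{1}{\rho_2}(t-s)$, $b(s)=(t-s)^2$, and $\nu(s)=b/a=\rho_2(t-s)$ time-dependent. This choice does three things at once: the boundary terms at $s=t$ vanish; the coefficient $b'+2\rho_2 a$ of $\psi$ vanishes identically so the $\Gamma^Z$ contributions drop out exactly; and the coefficient $a'-2\kappa a^2/b$ of $\phi$ is the \emph{constant} $-\frac{1}{\rho_2}-\frac{2\kappa}{\rho_2^2}$. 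Integrating over $[0,t]$ then gives $\frac{t}{\rho_2}\Gamma(P_tf)+t^2\Gamma^Z(P_tf)\le\big(\frac{1}{\rho_2}+\frac{2\kappa}{\rho_2^2}\big)\cdot\frac12\big(P_t(f^2)-(P_tf)^2\big)$, which is the claim after multiplying by $\rho_2/t$. So the constant $1+\frac{2\kappa}{\rho_2}$ is not obtained by optimizing a fixed $\nu$; it is forced by this specific pair of vanishing weights. Without identifying them, the proof does not close.
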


\begin{proof}

Let us fix $T>0$. Given a function $f\in C_0(\bM)$, for $0\le t\le T$ we introduce the  functionals

\[
\phi_1 (x,t)=\Gamma (P_{T-t}f)(x),
\]
\[
\phi_2 (x,t)= \Gamma^Z (P_{T-t}f)(x),
\]
which are defined on $\M\times [0,T]$. It is is easy to check that, with $\Gamma_2$ and $\Gamma_2^Z$ defined as in \eqref{gamma2} and \eqref{gamma2Z}, we have
\[
L\phi_1+\frac{\partial \phi_1}{\partial t} =2  \Gamma_2 ( P_{T-t}f). 
\]
and
\[
L\phi_2+\frac{\partial \phi_2}{\partial t} =2  \Gamma_2^Z ( P_{T-t}f),
\]
see also \cite{BG1}. 
Consider now the function
\begin{align*}
\phi (x,t)&= a(t) \phi_1 (x,t)+b(t) \phi_2(x,t) \\
 & =a(t)\Gamma ( P_{T-t}f)(x)+b(t) \Gamma^Z ( P_{T-t}f)(x),
\end{align*}
where $a$ and $b$ are two nonnegative functions  that will be chosen later.
At this point we observe that, since by hypothesis $\rho_1\ge 0$, the generalized curvature-dimension CD$(\rho_1,\rho_2,\kappa,d)$ in \eqref{CD} trivially implies the generalized curvature-dimension inequality CD$(0,\rho_2,\kappa,\infty)$. Applying the latter with the choice $\nu = \frac ba$ we thus obtain
\begin{align*}
  L\phi+\frac{\partial \phi}{\partial t} &=
a' \Gamma (P_{T-t}f)+b' (P_{T-t} f) \Gamma^Z ( P_{T-t}f)  +2a  \Gamma_2 (P_{T-t}f)+2b (P_{T-t} f) \Gamma_2^Z ( P_{T-t}f) \\
&\ge  \left(a' -2\kappa \frac{a^2}{b}\right) \Gamma (P_{T-t}f)  +(b'+2\rho_2 a) \Gamma^Z ( P_{T-t}f).
\end{align*}
Let us now chose
\[
a(t)=\frac{1}{\rho_2}(T-t),\ \ \ \ \ b(t)=(T-t)^2.
\]
With this choice we have on $[0,T]$,
\[
b'+2\rho_2 a\equiv 0,
\]
and
\[
a' -2\kappa \frac{a^2}{b}=-\frac{1}{\rho_2}-\frac{2 \kappa}{\rho_2^2}. 
\]
We find then
\[
L\phi+\frac{\partial \phi}{\partial t} \ge \left( -\frac{1}{\rho_2}-\frac{2 \kappa}{\rho_2^2} \right) \Gamma (P_{T-t}f).
\]
From a comparison theorem for parabolic partial differential equations (see for instance p.52 in \cite{Fried} or Proposition 3.2 in \cite{BG1})  we deduce
\begin{equation}\label{cac}
P_T(\phi(\cdot,T))(x) \ge \phi(x,0)  -\left(\frac{1}{\rho_2}+\frac{2 \kappa}{\rho_2^2} \right) \int_0^T P_t (\Gamma (P_{T-t}f) ) dt.
\end{equation}
To conclude, we consider the functional
\[
\Psi(t) = \frac 12 P_t\left((P_{T-t} f)^2)\right).
\]
A straightforward computation shows that
\begin{align*}
\Psi'(t) & = \frac 12 P_t\left(L(P_{T-t} f)^2)\right) +  P_t\left(P_{T-t} f \frac{\p}{\p t}(P_{T-t} f)\right)
\\
& = \frac 12 P_t \bigg(L(P_{T-t} f)^2) -  2 P_{T-t} f L(P_{T-t} f)\bigg) =  P_t\left(\Gamma(P_{T-t} f)\right).
\end{align*}
This gives
\[
 \int_0^T P_t (\Gamma (P_{T-t}f) dt= \Psi(T) - \Psi(0) = \frac{1}{2} \left( P_T (f^2) -(P_Tf)^2 \right).
\]
Replacing this information in \eqref{cac}, along with the identities
\[
\phi(x,0)=\frac{1}{\rho_2} T \Gamma (P_{T} f) + T^2 \Gamma^Z (P_{T} f), \quad \ \ \phi(x,T)=0,
\]
we reach the desired conclusion.

\end{proof}

As a corollary of Proposition \ref{P:caccioppoli} and of the $L^p$ continuity of $P_t$, for $1\le p\le \infty$, we obtain the following Caccioppoli type estimates.

\begin{corollary}\label{C:pcaccioppoli}
For  any $f\in C^\infty_0(\M)$, $2 \le p \le \infty$ we have
\[
 \| \sqrt{ \Gamma(P_t f) } \|_p  \le \sqrt{  \frac{1+\frac{2\kappa}{\rho_2} }{2t} } \| f \|_p.
 \]
\end{corollary}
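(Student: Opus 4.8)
The plan is to derive Corollary \ref{C:pcaccioppoli} directly from Proposition \ref{P:caccioppoli} by throwing away the nonnegative term $\rho_2 t \Gamma^Z(P_t f)$ and estimating the right-hand side using the $L^{p/2}$ contractivity of $P_t$. Starting from the proposition, I would first write
\[
\Gamma(P_t f) \le \frac{1+\frac{2\kappa}{\rho_2}}{2t}\left(P_t(f^2) - (P_t f)^2\right) \le \frac{1+\frac{2\kappa}{\rho_2}}{2t}\, P_t(f^2),
\]
where the last step uses $(P_t f)^2 \ge 0$. Taking square roots gives the pointwise bound $\sqrt{\Gamma(P_t f)} \le \sqrt{\frac{1+\frac{2\kappa}{\rho_2}}{2t}}\,\sqrt{P_t(f^2)}$.

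The core of the argument is then a norm estimate. I would apply the $L^{q}$ norm with $q = p/2$ (valid since $p \ge 2$ forces $q \ge 1$) to the function $P_t(f^2)$. Because $(P_t)_{t\ge 0}$ is a contraction semigroup on $L^q(\bM)$ for every $1 \le q \le \infty$ (as recorded in the list of properties in Section \ref{S:background}), we have $\|P_t(f^2)\|_{q} \le \|f^2\|_{q}$. The key observation converting this into the stated $L^p$ bound is the norm identity $\|P_t(f^2)\|_{p/2} = \|\sqrt{P_t(f^2)}\,\|_p^2$ together with $\|f^2\|_{p/2} = \|f\|_p^2$. Chaining these,
\[
\left\|\sqrt{P_t(f^2)}\,\right\|_p^2 = \|P_t(f^2)\|_{p/2} \le \|f^2\|_{p/2} = \|f\|_p^2,
\]
so that $\|\sqrt{P_t(f^2)}\,\|_p \le \|f\|_p$. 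Combining this with the pointwise bound above yields
\[
\|\sqrt{\Gamma(P_t f)}\,\|_p \le \sqrt{\frac{1+\frac{2\kappa}{\rho_2}}{2t}}\,\left\|\sqrt{P_t(f^2)}\,\right\|_p \le \sqrt{\frac{1+\frac{2\kappa}{\rho_2}}{2t}}\,\|f\|_p,
\]
which is exactly the claimed inequality. For the endpoint $p = \infty$ the same reasoning applies with $L^\infty$ contractivity of $P_t$ directly, giving $\|P_t(f^2)\|_\infty \le \|f^2\|_\infty = \|f\|_\infty^2$.

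I do not expect any serious obstacle here; the content is entirely in Proposition \ref{P:caccioppoli}, and the corollary is a routine passage from a pointwise inequality to an $L^p$ inequality. The only point requiring a little care is the bookkeeping of exponents, namely recognizing that the natural space on which to exploit the contractivity of $P_t$ applied to $f^2$ is $L^{p/2}$ rather than $L^p$, and that the constraint $p \ge 2$ is precisely what guarantees $p/2 \ge 1$ so that $P_t$ is a contraction there. One should also note that $f \in C^\infty_0(\bM)$ ensures $f^2 \in L^{p/2}(\bM)$ for all the relevant exponents, so every norm written above is finite and the manipulations are justified.
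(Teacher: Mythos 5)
Your argument is correct and is precisely the one the paper intends: the authors state the corollary as following from Proposition \ref{P:caccioppoli} together with the $L^q$ contractivity of $P_t$, which is exactly what you carry out by discarding the nonnegative terms $\rho_2 t\,\Gamma^Z(P_tf)$ and $(P_tf)^2$ and applying the contraction on $L^{p/2}$ to $f^2$. No discrepancy to report.
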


In what follows we will also need the following result.

\begin{corollary}\label{C:caccioppoli}
Let $n$ be the dimension of $\M$. With $C = \sqrt{\frac{(2\kappa+\rho_2)n}{2\rho_2}}$ we have
\begin{equation}\label{L1}
\sup_{x \in \bM} \int_\bM \sqrt{\Gamma ( p( \cdot,y,t) )(x)  } d\mu(y) \le  \frac{C}{\sqrt{t}}. 
\end{equation}
This estimate implies that for any $f \in L^\infty(\bM)$,
\begin{equation}\label{Linfty}
 \| \sqrt{ \Gamma(P_t f) } \|_\infty  \le \frac{C}{\sqrt{t}} \| f \|_\infty.
 \end{equation}
\end{corollary}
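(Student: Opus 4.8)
The plan is to prove Corollary \ref{C:caccioppoli} by first establishing the pointwise kernel estimate \eqref{L1}, and then deducing \eqref{Linfty} by a density/duality argument. The starting point is Corollary \ref{C:pcaccioppoli}, which already gives the $L^p$ bound $\|\sqrt{\Gamma(P_t f)}\|_p \le \sqrt{(1+2\kappa/\rho_2)/2t}\,\|f\|_p$ for $2\le p\le\infty$. The difficulty is that \eqref{L1} is an $L^1$-type statement about the kernel in the $y$-variable with a \emph{fixed} base point $x$, so I would not expect to get it directly from the $L^\infty$ case $p=\infty$ alone; instead I expect to combine the pointwise Gaussian upper bound for the \emph{gradient} of the heat kernel with the doubling property.

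First I would seek a pointwise estimate of the form $\sqrt{\Gamma(p(\cdot,y,t))(x)} \le \frac{C'}{\sqrt t\, V(x,\sqrt t)}\exp\!\left(-\frac{d(x,y)^2}{ct}\right)$. To obtain this, the natural route is to apply the Caccioppoli inequality of Proposition \ref{P:caccioppoli} to the heat kernel itself, writing $p(\cdot,y,t) = P_{t/2}\,p(\cdot,y,t/2)$ via Chapman--Kolmogorov, so that $\Gamma(P_{t/2}\,g)$ with $g=p(\cdot,y,t/2)$ is controlled by $\frac{C}{t}\bigl(P_{t/2}(g^2)-(P_{t/2}g)^2\bigr)\le \frac{C}{t}P_{t/2}(g^2)$ pointwise. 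Now $P_{t/2}(g^2)(x)=\int p(x,z,t/2)\,p(z,y,t/2)^2\,d\mu(z)$, and inserting the Gaussian upper bound of Theorem \ref{T:ugb} for each factor, together with the semigroup/Chapman--Kolmogorov relation and the volume doubling consequences \eqref{dct} and \eqref{exp} from Corollary \ref{C:dc}, should collapse this integral into a single Gaussian of the stated form. I anticipate this is the main obstacle: controlling the triple product of kernels and matching the volume factors $V(x,\sqrt t)$, $V(y,\sqrt t)$ requires careful use of \eqref{dct} to trade $V(z,\cdot)$ for $V(x,\cdot)$ and absorb the exponential cross-terms.

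Once the pointwise gradient bound is in hand, \eqref{L1} follows by integrating in $y$:
\[
\int_\M \sqrt{\Gamma(p(\cdot,y,t))(x)}\,d\mu(y) \le \frac{C'}{\sqrt t\,V(x,\sqrt t)}\int_\M \exp\!\left(-\frac{d(x,y)^2}{ct}\right)d\mu(y),
\]
and the integral on the right is bounded by $C\,V(x,\sqrt t)$ thanks to \eqref{exp}. The two volume factors cancel, leaving exactly $\frac{C}{\sqrt t}$ uniformly in $x$, which is \eqref{L1}. I would track the constants through this computation to recover the explicit value $C=\sqrt{(2\kappa+\rho_2)n/(2\rho_2)}$, where the dimensional factor $n$ enters because the local representation \eqref{Grep} involves at most $m\le n$ vector fields, so $\Gamma$ of a sum is controlled with a factor $n$.

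Finally, to pass from \eqref{L1} to \eqref{Linfty}, I would write $P_t f(x)=\int f(y)\,p(x,y,t)\,d\mu(y)$ and differentiate under the integral sign, so that for $f\in L^\infty(\M)$,
\[
\sqrt{\Gamma(P_t f)(x)} = \sqrt{\Gamma\!\left(\int f(y)\,p(\cdot,y,t)\,d\mu(y)\right)(x)} \le \int |f(y)|\,\sqrt{\Gamma(p(\cdot,y,t))(x)}\,d\mu(y),
\]
where the last inequality uses the triangle inequality for the seminorm $\sqrt{\Gamma(\cdot)}$, valid since $\sqrt{\Gamma(\cdot)}$ is a Hilbertian (hence sublinear) norm on gradients by \eqref{Grep}. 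Bounding $|f(y)|\le\|f\|_\infty$ and applying \eqref{L1} then yields $\sqrt{\Gamma(P_t f)(x)}\le \frac{C}{\sqrt t}\|f\|_\infty$ for every $x$, which is precisely \eqref{Linfty}. The justification of differentiating under the integral and of the triangle inequality for $\sqrt{\Gamma}$ are routine given the smoothness of the kernel and the representation \eqref{Grep}.
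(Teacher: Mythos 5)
Your argument is essentially correct, but it takes a genuinely different route from the paper, and the difference is instructive. The paper obtains \eqref{L1} \emph{directly} from the $p=\infty$ case of Corollary \ref{C:pcaccioppoli} by a duality argument: for fixed $x$ and each $i$ in a local representation \eqref{Lrep}, the map $f\mapsto X_iP_tf(x)=\int_\bM X_ip(\cdot,y,t)(x)f(y)\,d\mu(y)$ is a linear functional on $L^\infty(\bM)$ of norm at most $\sqrt{(1+2\kappa/\rho_2)/2t}$, and testing it against $f(y)=h_k(y)\,X_ip(\cdot,y,t)(x)/(|X_ip(\cdot,y,t)(x)|+\varepsilon)$ and letting $k\to\infty$, $\varepsilon\to0$ recovers exactly $\int_\bM|X_ip(\cdot,y,t)(x)|\,d\mu(y)$ --- so the $L^1_y$ kernel bound \emph{is} the $L^\infty$ bound, dualized. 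Your instinct that the $p=\infty$ case alone would not suffice is therefore what sends you down the longer road: you re-derive a pointwise Gaussian gradient bound for the kernel (essentially Theorem \ref{T:gei}, which the paper proves only later and, in fact, \emph{using} this corollary --- no circularity in your version, since you go back to Proposition \ref{P:caccioppoli}, Chapman--Kolmogorov and Theorem \ref{T:ugb}, but you are proving more than is needed) and then integrate it using \eqref{dct} and \eqref{exp}. That computation does close up and yields a bound of the form $C/\sqrt t$, so the proof is valid; what it buys is an a priori stronger pointwise estimate, at the price of invoking the Gaussian upper bound and volume doubling, neither of which the paper's two-line duality argument needs. Two caveats: (i) your claim that you would ``track the constants'' to recover the explicit value $C=\sqrt{(2\kappa+\rho_2)n/(2\rho_2)}$ cannot work, since your route imports the (non-explicit, $\varepsilon$-degenerate) constants of Theorem \ref{T:ugb} and the doubling constant $C_d$; only the clean $L^\infty$ duality argument has a chance of producing that constant. (ii) Both your application of Proposition \ref{P:caccioppoli} to $g=p(\cdot,y,t/2)$ and the paper's use of Corollary \ref{C:pcaccioppoli} with $p=\infty$ require extending the Caccioppoli inequality from $C^\infty_0(\bM)$ to bounded smooth functions; this is routine via the cutoffs $h_k$ of hypothesis (H.1) but should be acknowledged. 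Your passage from \eqref{L1} to \eqref{Linfty} via Minkowski's integral inequality for the locally $\ell^2$-valued gradient is exactly the paper's.
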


\begin{proof}
 For $x\in \M$ fixed, let then $U_x$ be a sufficiently small neighborhood of $x$ in which we can write $L$ as in \eqref{Lrep}, 
where the vector fields $X_i$ are Lipschitz continuous. For  $f \in C^\infty_0(\bM)$, we have
\[
\left|  \int_\bM X_i p( \cdot,y,t) (x) f(y) d\mu (y) \right| = |X_i P_t f (x)|  \le \| \sqrt{ \Gamma(P_t f) } \|_\infty \le \sqrt{ \frac{1+\frac{2\kappa}{\rho_2} }{2t} } \| f \|_\infty
\]
 Let $\varepsilon >0$ and take now
 \[
 f(y)= h_k(y) \frac{X_i p( \cdot,y,t) (x) }{|X_i p( \cdot,y,t) (x)|+\varepsilon} ,
 \]
 where $0 \le h_k \le 1$ is an increasing  sequence in $C^\infty_0(\bM)$, converging to $1$. We obtain
 \[
 \int_\bM  h_k (y) \frac{X_i p( \cdot,y,t) (x)^2 }{|X_i p( \cdot,y,t) (x)|+\varepsilon}  d\mu(y) \le \sqrt{ \frac{1+\frac{2\kappa}{\rho_2} }{2t} }
 \]
 By the monotone convergence theorem and by Fatou's theorem we deduce, first letting $k \to \infty$ and then $\varepsilon \to 0$, that
 \[
 \int_\bM |X_i p( \cdot,y,t) (x)| d\mu(y) \le  \sqrt{ \frac{1+\frac{2\kappa}{\rho_2} }{2t}}.
 \]
The estimate \eqref{L1} follows immediately from the latter inequality. 
 
\end{proof}

\subsubsection{Gaffney-type estimates}\label{SS:gaffney}

We now turn to the second main ingredient which are Gaffney type estimates. In what follows we indicate with $E, F\subset \M$ two closed subsets. We need the following results.

\begin{lemma}\label{L:gaf1}
For every two closed sets $E, F\subset \M$, and any $f\in L^\infty(\M)$ supported in $E$, one has
\[
||P_t f||_{L^2(F)} \le e^{-\frac{d(E,F)^2}{4t}} ||f||_{L^2(E)}.
\]
\end{lemma}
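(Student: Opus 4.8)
The plan is to prove this Gaffney-type estimate via the classical Davies--Gaffney method, introducing an exponential weight and differentiating a weighted energy in time. First I would fix a bounded Lipschitz function $\xi$ on $\M$ that approximates the distance to $E$, with $\Gamma(\xi) \le 1$, and for a parameter $\alpha > 0$ consider the weighted quantity
\[
I(t) = \int_\M e^{2\alpha \xi} (P_t f)^2 \, d\mu.
\]
The strategy is to bound the growth of $I(t)$, then exploit the separation between $E$ and $F$: since $f$ is supported in $E$ where $\xi \le 0$ (after normalizing) and $F$ lies where $\xi \ge d(E,F)$, the weight converts a bound on $I(t)$ into the desired decay on $\|P_t f\|_{L^2(F)}$.

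The key computation is to differentiate $I(t)$ and integrate by parts using the symmetry of $L$ and the representation $\Gamma(u) = \sum_i (X_i u)^2$. Writing $u = P_t f$, one has
\[
I'(t) = 2 \int_\M e^{2\alpha \xi} u \, Lu \, d\mu = -2 \int_\M \Gamma(e^{2\alpha \xi} u, u)\, d\mu,
\]
and after expanding $\Gamma(e^{2\alpha \xi} u, u) = e^{2\alpha \xi}\Gamma(u) + u \, \Gamma(e^{2\alpha \xi}, u)$ and applying the Cauchy--Schwarz inequality for $\Gamma$ together with $\Gamma(e^{2\alpha \xi}) = 4\alpha^2 e^{4\alpha\xi}\Gamma(\xi) \le 4\alpha^2 e^{4\alpha\xi}$, the good term $-2\int e^{2\alpha\xi}\Gamma(u)$ absorbs the cross term and one is left with the differential inequality
\[
I'(t) \le 2\alpha^2 \int_\M e^{2\alpha \xi} u^2 \, d\mu = 2\alpha^2 I(t).
\]
Integrating gives $I(t) \le e^{2\alpha^2 t} I(0)$, and since $I(0) \le \|f\|_{L^2(E)}^2$ (using $\xi \le 0$ on $E$), the weight on $F$ yields $e^{2\alpha\, d(E,F)} \|P_t f\|_{L^2(F)}^2 \le e^{2\alpha^2 t}\|f\|_{L^2(E)}^2$. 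Optimizing over $\alpha$ by choosing $\alpha = d(E,F)/(2t)$ produces exactly the exponent $-d(E,F)^2/(4t)$ claimed.

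The main obstacle I expect is the justification of the integration by parts and the differentiation under the integral sign in this non-compact, subelliptic setting, since $e^{2\alpha\xi}$ is unbounded and $f$ is merely $L^\infty$ rather than compactly supported in a way that controls decay at infinity. I would handle this by first proving the estimate for $f \in C^\infty_0(\M)$, truncating the weight $e^{2\alpha\xi}$ by the cutoff sequence $h_k$ from hypothesis (H.1) so that all integrals are over compact sets and genuinely finite, controlling the resulting error terms via $\|\Gamma(h_k)\|_\infty \to 0$, and then passing to the limit; the general $f \in L^\infty$ case follows by a density and truncation argument using the $L^2$-contractivity of $P_t$. The subtlety is that $\xi$ is only Lipschitz, so $\Gamma(\xi)$ is defined almost everywhere and the chain rule for $\Gamma$ applied to $e^{2\alpha\xi}$ must be interpreted carefully, but the bound $\Gamma(\xi) \le 1$ is exactly the content of the definition \eqref{di} of the distance $d$, which keeps all the weight estimates clean.
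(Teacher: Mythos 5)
Your proposal is correct and is essentially the paper's own argument: the same Davies--Gaffney weighted energy $\int \phi^2 (P_t f)^2\,d\mu$, the same integration by parts with Cauchy--Schwarz absorbing the cross term into $-2\int\phi^2\Gamma(P_tf)\,d\mu$, the same Gronwall step yielding the factor $e^{\alpha^2 t}$, and the same optimization $\alpha = d(E,F)/(2t)$. The only (cosmetic) difference is that the paper takes $\psi(x)=d(x,F)$ and the bounded weight $\phi=e^{-\alpha\psi}\le 1$, which sidesteps the unbounded-weight/truncation issue you flag for $e^{2\alpha\xi}$.
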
 

\begin{proof}
Let us suppose $d(E,F)>0$, otherwise the conclusion follows trivially from the $L^2$ continuity of $P_t$. As a first step we let $\psi$ denote a function in Lip$(\M)$, such that $\Gamma(\psi)\le 1$ a.e. on $\M$. With $\alpha>0$ and $\phi = e^{-\alpha \psi}$, we note that
\begin{equation}\label{gammaphi}
\Gamma(\phi)  = \alpha^2 \phi^2 \Gamma(\psi) \le  \alpha^2 \phi^2.
\end{equation}
Let $f\in L^\infty(\M)$, and consider 
\[
y(t) = ||\phi P_t f||^2_{L^2(\M)}.
\]
Denoting with $<\cdot,\cdot>$ the inner product in $L^2(\M)$, we have
\begin{align*}
y'(t) & = 2 <L(P_t f),\phi^2 P_t f> = - 2 \int_\M \Gamma(P_t f,\phi^2 P_t f) d\mu
\\
& = - 2 \int_\M \phi^2 \Gamma(P_t f) d\mu - 4  \int_\M \phi P_t f \Gamma(P_t f,\phi) d\mu
\\
& \le - 2 \int_\M \phi^2 \Gamma(P_t f) d\mu + \frac 2\ve   \int_\M \phi^2  \Gamma(P_t f) d\mu +  2 \ve \alpha^2  \int_\M \phi^2  (P_t f)^2 d\mu, 
\end{align*}
where to estimate the last term we have used \eqref{gammaphi}. Choosing $\ve = 1$ we conclude
\[
y'(t) \le 2\alpha^2 y(t),
\]
and, upon integrating this inequality, we obtain
\begin{equation}\label{phipt}
||\phi P_t f||_{L^2(\M)} \le e^{\alpha^2 t} ||\phi f||_{L^2(\M)}.
\end{equation}

We now want to show that \eqref{phipt} yields the desired conclusion. Suppose that supp$ f \subset E$, then we argue as follows. We take $\psi(x) = d(x,F)$, and for any $\alpha >0$ we let $\phi = e^{- \alpha \psi}$. Since $\phi \equiv 1$ on $F$, from \eqref{phipt} we obtain
\[
||P_t f||_{L^2(F)} \le ||\phi P_t f||_{L^2(\M)} \le e^{\alpha^2 t} ||\phi f||_{L^2(\M)} =  e^{\alpha^2 t} ||\phi f||_{L^2(E)}.
\]
Now, for any $x\in E$ we have $\psi(x) \ge d(E,F)$, and thus $\phi \le e^{-\alpha d(E,F)}$ on $E$. This gives
\[
||P_t f||_{L^2(F)} \le  e^{\alpha^2 t - \alpha d(E,F)} ||f||_{L^2(E)}.
\]
By choosing $\alpha = \frac{d(E,F)}{2 t}>0$ we reach the desired conclusion.  

\end{proof}

For $\omega>0$ sufficiently small denote 
\[
S_\omega = \{z = r e^{i\theta}\in \mathbb C\mid 0<r<\infty,\ |\theta|<\frac \pi2 + \omega\}.
\]
Since $L$ generates an analytic semigroup $e^{zL}$ in a sector $S_\omega$, we have 
\[
P_t = \frac{1}{2\pi i} \int_{\Gamma_\delta}  e^{t\zeta} R(\zeta;L) d\zeta,
\]
where $\Gamma_\delta\subset S_\omega$ is the path composed of the two rays $r e^{i\theta}$ and $r e^{-i\theta}$, with $0<r<\infty$ and $\frac \pi2 <\theta<\frac \pi2 +\delta$, $0<\delta<\omega$, and $R(\zeta;L)$ is the resolvent of $L$. Using the same argument as in Lemma \ref{L:gaf1} it is easy to see that the Gaffney estimate \eqref{phipt} continues to be valid for $P_z f$ with $z\in S_\delta$ for any fixed $0<\delta<\omega$. Similarly to what was done above  this leads to an estimate of the type
\begin{equation}\label{phipt2}
||P_z f||_{L^2(F)} \le  e^{\alpha^2 |\Re z| - \alpha d(E,F)} ||f||_{L^2(E)},\ \ \ \ z\in S_\delta,
\end{equation}
for any $f\in L^\infty(\M)$ which is supported in $E$, and any $\alpha>0$.  

\begin{lemma}\label{L:gaf2}
There exists a constant $C>0$ such that for every two closed sets $E, F\subset \M$, and any $f\in L^\infty(\M)$ supported in $E$, one has
\[
t ||L P_t f||_{L^2(F)} \le C e^{-\frac{d(E,F)^2}{6t}} ||f||_{L^2(E)}.
\]
\end{lemma}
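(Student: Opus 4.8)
The plan is to exploit the analyticity of the semigroup together with the complex Gaffney estimate \eqref{phipt2}. The key observation is that $LP_t f = \frac{d}{dt} P_t f$, and since $z \mapsto P_z f$ is analytic in the sector $S_\delta$, Cauchy's integral formula for the derivative lets us write, for any $r$ with $0<r<t$ chosen so that the closed disc $\{z : |z-t|\le r\}$ lies in $S_\delta$,
\[
L P_t f = \frac{1}{2\pi i} \int_{|z-t|=r} \frac{P_z f}{(z-t)^2}\, dz .
\]
Taking $L^2(F)$ norms and using Minkowski's integral inequality, together with $|z-t|=r$ on the contour and its length $2\pi r$, yields
\[
\|L P_t f\|_{L^2(F)} \le \frac{1}{r}\, \sup_{|z-t|=r} \|P_z f\|_{L^2(F)} .
\]

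Next I would substitute the complex Gaffney estimate \eqref{phipt2}. On the circle $|z-t|=r$ we have $\Re z \in [t-r,\,t+r]$, so if $r<t$ then $\Re z>0$ and $|\Re z|\le t+r$. Hence for every $\alpha>0$,
\[
\|P_z f\|_{L^2(F)} \le e^{\alpha^2(t+r)-\alpha\, d(E,F)}\, \|f\|_{L^2(E)},
\]
and therefore
\[
\|L P_t f\|_{L^2(F)} \le \frac{1}{r}\, e^{\alpha^2(t+r)-\alpha\, d(E,F)}\, \|f\|_{L^2(E)} .
\]
The free parameter $\alpha$ is then chosen to minimize the exponent: the optimal choice $\alpha = \frac{d(E,F)}{2(t+r)}$ produces the exponent $-\frac{d(E,F)^2}{4(t+r)}$.

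Finally, to reach the stated constant in the exponent I would take $r=\frac{t}{2}$, so that $t+r=\frac{3t}{2}$ and $\frac{1}{4(t+r)}=\frac{1}{6t}$; this choice keeps the disc well inside the right half-plane (hence inside $S_\delta$), since $\Re z \ge t/2>0$ there. With $r=t/2$ the prefactor becomes $\frac1r=\frac2t$, and multiplying through by $t$ gives exactly
\[
t\,\|L P_t f\|_{L^2(F)} \le 2\, e^{-\frac{d(E,F)^2}{6t}}\, \|f\|_{L^2(E)},
\]
so $C=2$ works. The only points requiring care — and the main (if modest) obstacle — are verifying that \eqref{phipt2} is genuinely valid on the complex circle used in the Cauchy representation (which follows from the analyticity of $e^{zL}$ in $S_\delta$ and the extension of the Gaffney estimate to complex times already noted in the text), and checking that the disc $|z-t|\le t/2$ lies in the sector of analyticity, which is immediate since it is contained in $\{\Re z>0\}$.
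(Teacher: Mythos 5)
Your argument is correct and is essentially identical to the paper's own proof: both use the Cauchy integral representation of $tLP_t = t\,\frac{\p P_t}{\p t}$ over the circle of radius $t/2$ centered at $t$, apply the complex-time Gaffney estimate \eqref{phipt2} with $|\Re z|\le \frac{3t}{2}$ on that contour, and optimize with $\alpha = \frac{d(E,F)}{3t}$ to obtain the exponent $-\frac{d(E,F)^2}{6t}$. No substantive differences.
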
 

\begin{proof}
Consider the path 
\[
\Gamma_t = \{z = t\left(1+\frac{e^{i\theta}}{2}\right)\in \mathbb C\mid 0\le \theta \le 2\pi\}\subset S_\omega.
\]
Using the analyticity of $P_t$ in the sector $S_\omega$ we can write
\[
t \frac{\p P_t}{\p t} = \frac{t}{2\pi i} \int_{\Gamma_t} \frac{e^{z L}}{(z - t)^2} dz = \frac 1\pi \int_0^{2\pi} e^{-i\theta} e^{t\left(1+\frac{e^{i\theta}}{2}\right)L} d\theta.
\]
Using \eqref{phipt2}, this gives
\begin{align*}
t ||LP_t f||_{L^2(F)} & \le \frac{1}{\pi} \int_0^{2\pi} ||P_{t\left(1+\frac{e^{i\theta}}{2}\right)} f||_{L^2(F)}
\\
& \le 2 e^{\alpha^2 t(1+\frac{|\cos \theta|}{2}) - \alpha d(E,F)} ||f||_{L^2(E)}
\\
& \le 2 e^{\frac{3}{2} \alpha^2 t - \alpha d(E,F)} ||f||_{L^2(E)}
\end{align*}
Choosing $\alpha = \frac{d(E,F)}{3t}$ yields the desired conclusion.

\end{proof}

Since the distance function $y\to d(x,y)$ is obviously a Lipschitz continuous function on $\M$ (with respect to $d$ itself), in view of \eqref{Lrep} and \eqref{Grep}, and of a Rademacher type theorem for Lipschitz vector fields we can construct Lipschitz continuous cut-off functions on metric balls, see for instance Theorem 1.5 in \cite{GN}. We collect this fact in the following lemma.

\begin{lemma}\label{L:bumps}
Let $0<s<t<\infty$. There exists a constant $C>0$ such that for every $B(x,s)\subset B(x,t)\subset \M$ there exists a function $\varphi\in$ \emph{Lip}$(\M)$, with $0\le \varphi\le 1$, $\varphi \equiv 1$ on $B(x,s)$, and supp$\ \varphi \subset B(x,t)$, for which
\[
\sqrt{\Gamma(\varphi)}\le \frac{C}{t-s}.
\]
\end{lemma}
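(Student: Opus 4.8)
The plan is to build $\varphi$ by composing a one-dimensional cut-off profile with the distance function from the fixed center. Fix $x\in\M$ and set $\rho(y)=d(x,y)$. By the very definition of $d$ in \eqref{di}, the function $\rho$ is Lipschitz continuous on $(\M,d)$ with Lipschitz constant one. Using the local representation \eqref{Lrep} and the resulting formula \eqref{Grep}, together with the Rademacher-type theorem for Lipschitz functions along the horizontal fields $X_i$ (Theorem 1.5 in \cite{GN}), $\rho$ is differentiable along each $X_i$ almost everywhere. The key preliminary step I would establish is the eikonal-type bound
\[
\Gamma(\rho) = \sum_{i=1}^m (X_i \rho)^2 \le 1 \qquad \text{a.e. on }\M.
\]
This reflects the fact that $\rho$ saturates, in the almost-everywhere sense, the gradient constraint $\|\Gamma(f)\|_\infty\le 1$ appearing in the variational definition \eqref{di} of $d$.

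Next I would fix the scalar profile. Let $\eta\colon\R\to[0,1]$ be the piecewise-linear Lipschitz function equal to $1$ on $(-\infty,s]$, equal to $0$ on $[\frac{s+t}{2},\infty)$, and affine on $[s,\frac{s+t}{2}]$; then $0\le\eta\le1$ and $|\eta'|\le\frac{2}{t-s}$ a.e. Define $\varphi=\eta\circ\rho$, that is $\varphi(y)=\eta(d(x,y))$. Since $\rho$ is $1$-Lipschitz and $\eta$ is $\frac{2}{t-s}$-Lipschitz, $\varphi\in$ Lip$(\M)$. By construction $\varphi\equiv1$ on $B(x,s)$ (where $\rho\le s$) and $\varphi\equiv0$ on $\M\setminus B(x,\frac{s+t}{2})$ (where $\rho\ge\frac{s+t}{2}$), so that $0\le\varphi\le1$ and supp$\,\varphi\subset\overline{B(x,\frac{s+t}{2})}\subset B(x,t)$.

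Finally, I would invoke the chain rule for the \emph{carr\'e du champ}. Because $\Gamma$ is a first-order quadratic form given locally by \eqref{Grep}, and because both $\eta$ and $\rho$ are Lipschitz, the composition rule $\Gamma(\eta\circ\rho)=(\eta'\circ\rho)^2\,\Gamma(\rho)$ holds almost everywhere. Combining this with the eikonal bound $\Gamma(\rho)\le1$ and $|\eta'|\le\frac{2}{t-s}$ yields
\[
\Gamma(\varphi) = (\eta'\circ\rho)^2\,\Gamma(\rho) \le \frac{4}{(t-s)^2} \qquad \text{a.e.},
\]
whence $\sqrt{\Gamma(\varphi)}\le\frac{2}{t-s}$, which proves the claim with $C=2$. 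The main obstacle is not the construction but the rigorous justification of the two almost-everywhere assertions for the merely Lipschitz function $\rho$, namely the eikonal inequality $\Gamma(\rho)\le1$ and the chain rule. Both rest precisely on the Rademacher-type differentiability theorem for Lipschitz functions along the fields $X_i$ combined with the pointwise representation \eqref{Grep}, which is the ingredient already flagged from \cite{GN} in the sentence preceding the statement.
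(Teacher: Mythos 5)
Your construction is correct and follows exactly the route the paper indicates: the paper does not write out a proof of this lemma but points to the Lipschitz continuity of $y\mapsto d(x,y)$, the local representation \eqref{Lrep}--\eqref{Grep}, and the Rademacher-type theorem of \cite{GN}, which is precisely the eikonal bound $\Gamma(\rho)\le 1$ a.e.\ and the chain rule that you identify as the two technical pillars. Your explicit piecewise-linear profile composed with the distance function is the standard realization of that sketch, so the two arguments coincide.
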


\begin{corollary}\label{C:bumpsforsets}
Given a closed set $F\subset \M$, consider the open set $F_\ve = \{x\in \M\mid d(x,F)<\ve\}$. There exists a function $\varphi\in $ \emph{Lip}$(\M)$ such that $0\le \varphi\le 1$, $\varphi \equiv 1$ on $F_{\ve/2}$, and $\varphi \equiv 0$ in $\M\setminus F_\ve$, and for which
\[
\sqrt{\Gamma(\varphi)} \le \frac{C}{\ve}.
\]
\end{corollary}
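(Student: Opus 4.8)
The plan is to deduce Corollary~\ref{C:bumpsforsets} directly from Lemma~\ref{L:bumps} by a covering and patching argument, since Lemma~\ref{L:bumps} already provides good cut-off functions on metric balls with a quantitative gradient bound. The key observation is that the annular region $F_\ve \setminus F_{\ve/2}$ is where the transition from $1$ to $0$ must occur, and this region has thickness comparable to $\ve/2$ in the distance $d$. Concretely, I would set $\psi(x) = d(x,F)$, which by the definition \eqref{di} of $d$ and the fact that $y\mapsto d(x,y)$ is $1$-Lipschitz satisfies $\sqrt{\Gamma(\psi)}\le 1$ almost everywhere, and then compose with a fixed smooth scalar profile.

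First I would choose a Lipschitz function $\chi:[0,\infty)\to[0,1]$ with $\chi(r)=1$ for $0\le r\le \ve/2$, $\chi(r)=0$ for $r\ge \ve$, and $|\chi'|\le \frac{C'}{\ve}$ on the intermediate interval, for some absolute constant $C'$. Setting $\varphi = \chi\circ\psi = \chi(d(\cdot,F))$, we immediately get $0\le\varphi\le 1$, with $\varphi\equiv 1$ on $F_{\ve/2}$ and $\varphi\equiv 0$ on $\M\setminus F_\ve$. The chain rule for $\Gamma$, together with $\Gamma(\psi)\le 1$, then yields
\[
\sqrt{\Gamma(\varphi)} = |\chi'(\psi)|\sqrt{\Gamma(\psi)} \le \frac{C'}{\ve}\cdot 1 = \frac{C}{\ve},
\]
which is exactly the claimed bound. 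This is the cleanest route and it is morally the same construction used in Lemma~\ref{L:gaf1}, where $\psi(x)=d(x,F)$ was already employed as a Lipschitz function with $\Gamma(\psi)\le 1$.

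The main technical point to be careful about is the regularity of $\varphi$: the distance function $d(\cdot,F)$ is only Lipschitz, not smooth, so the chain-rule identity $\Gamma(\chi\circ\psi)=(\chi'\circ\psi)^2\Gamma(\psi)$ must be justified in the appropriate almost-everywhere sense. This is where the Rademacher-type theorem for Lipschitz vector fields, invoked in the paragraph preceding Lemma~\ref{L:bumps} via the representation \eqref{Grep}, does the work: in a local chart $U_x$ one has $\Gamma(f)=\sum_i(X_if)^2$, and since $\psi$ is Lipschitz the derivatives $X_i\psi$ exist almost everywhere with $\sum_i(X_i\psi)^2\le 1$, while $\chi$ is Lipschitz so $\chi\circ\psi$ is again Lipschitz and the composition rule holds $\mu$-almost everywhere. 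Thus the bound $\sqrt{\Gamma(\varphi)}\le C/\ve$ holds in the same weak sense as in Lemma~\ref{L:bumps}, and if a genuinely smooth cut-off is required one mollifies at the end without degrading the gradient estimate by more than a fixed constant factor. The only real obstacle, then, is bookkeeping the almost-everywhere versus pointwise distinction in the chain rule for the subelliptic gradient; the geometric content is entirely trivial once Lemma~\ref{L:bumps} and the Lipschitz property of $d(\cdot,F)$ are in hand.
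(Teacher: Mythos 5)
Your argument is correct, but it is not the route the paper takes: the paper disposes of Corollary \ref{C:bumpsforsets} in one line, as a consequence of Lemma \ref{L:bumps} via a standard partition of unity (cover $F_{\varepsilon/2}$ by balls of radius comparable to $\varepsilon$, take the ball cut-offs from Lemma \ref{L:bumps}, each with $\sqrt{\Gamma(\varphi_i)}\le C/\varepsilon$, and patch them, using doubling to control the overlaps). You instead bypass Lemma \ref{L:bumps} entirely and build the cut-off globally as $\varphi=\chi\circ d(\cdot,F)$, with $\chi$ a Lipschitz profile satisfying $|\chi'|\le C/\varepsilon$, relying on the fact that $d(\cdot,F)$ is $1$-Lipschitz for $d$ and hence satisfies $\Gamma(d(\cdot,F))\le 1$ a.e.\ by the Rademacher-type theorem of \cite{GN} together with the chain rule for $\Gamma$ applied a.e.\ to Lipschitz functions. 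This is legitimate and in fact consistent with the paper's own use of $\psi(x)=d(x,F)$ with $\Gamma(\psi)\le 1$ in the proof of Lemma \ref{L:gaf1}; your construction is arguably cleaner, avoids the covering and overlap bookkeeping, and directly produces a function in Lip$(\M)$, which is all the statement requires (no mollification is needed). What the paper's route buys is that it reuses the already-stated Lemma \ref{L:bumps} and keeps all the measure-theoretic subtleties of the a.e.\ chain rule confined to that one lemma; what yours buys is a shorter, self-contained construction whose only nontrivial input is $\Gamma(d(\cdot,F))\le 1$ a.e. Your identification of the a.e.\ chain rule for the subelliptic gradient of Lipschitz compositions as the only delicate point is accurate.
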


\begin{proof}
It follows from Lemma \ref{L:bumps} by a standard partition of unity argument.

\end{proof}

We are now in a position to establish the third Gaffney type estimate which we will need. 

\begin{lemma}\label{L:gaf3}
There exist constants $C \ge 0$ and $\alpha >0$ such that for any two closed sets $E, F\subset \M$,  and every function $f \in L^\infty(\M)$ supported in $E$, one has
\[
\sqrt{t} \| \sqrt{\Gamma (P_t f)} \|_{L^2(F)} \le C e^{ -\alpha \frac{d(E,F)^2}{t}} \| f\|_{L^2(E)}.
\]
\end{lemma}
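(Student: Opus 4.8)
The goal is to upgrade the $L^2$-to-$L^2$ Gaffney estimate for $P_t$ itself (Lemma \ref{L:gaf1}) and for $LP_t$ (Lemma \ref{L:gaf2}) into a Gaffney estimate for the gradient $\sqrt{\Gamma(P_t f)}$ on the far set $F$. The natural route is to localize with a Lipschitz cut-off and then integrate by parts, using the Caccioppoli inequality of Proposition \ref{P:caccioppoli} (equivalently Corollary \ref{C:pcaccioppoli}) to trade one order of differentiation for a factor of $t^{-1/2}$, and Lemma \ref{L:gaf2} to control the resulting lower-order terms with the desired Gaussian decay.

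\begin{proof}[Plan]
We may assume $d(E,F)>0$, since otherwise the conclusion follows from the $L^2$ bound in Corollary \ref{C:pcaccioppoli}. First I would introduce, via Corollary \ref{C:bumpsforsets}, a cut-off $\varphi\in\mathrm{Lip}(\M)$ with $0\le\varphi\le 1$, $\varphi\equiv 1$ on a neighborhood $F_{\ve/2}$ of $F$, $\varphi\equiv 0$ outside $F_\ve$, and $\sqrt{\Gamma(\varphi)}\le C/\ve$, where $\ve$ is to be chosen of order $d(E,F)$. The key identity is the integration by parts
\[
\int_\M \varphi^2 \,\Gamma(P_t f)\,d\mu
= -\int_\M \varphi^2\,(P_t f)\,L(P_t f)\,d\mu
-2\int_\M \varphi\,(P_t f)\,\Gamma(P_t f,\varphi)\,d\mu,
\]
which expresses the localized Dirichlet energy in terms of $LP_t f$ and a cross term. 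Since $\|\sqrt{\Gamma(P_t f)}\|_{L^2(F)}^2\le\int_\M\varphi^2\,\Gamma(P_t f)\,d\mu$, it suffices to bound the two terms on the right.

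The second, cross, term is handled by Cauchy--Schwarz together with $\Gamma(\varphi)\le C^2/\ve^2$: it is dominated by
\[
2\Big(\int_\M\varphi^2\,\Gamma(P_t f)\,d\mu\Big)^{1/2}
\Big(\int_\M (P_t f)^2\,\Gamma(\varphi)\,d\mu\Big)^{1/2}
\le \tfrac12\int_\M\varphi^2\,\Gamma(P_t f)\,d\mu
+ \frac{C^2}{\ve^2}\int_{F_\ve}(P_t f)^2\,d\mu,
\]
so after absorbing the first piece into the left-hand side we are left with $\ve^{-2}\|P_t f\|_{L^2(F_\ve)}^2$ and, from the first term, $\|\varphi P_t f\|_{L^2}\,\|LP_t f\|_{L^2(F_\ve)}$. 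Both of these are now controlled by the earlier Gaffney estimates applied on the enlarged target set $F_\ve$: Lemma \ref{L:gaf1} gives $\|P_t f\|_{L^2(F_\ve)}\le e^{-d(E,F_\ve)^2/4t}\|f\|_{L^2(E)}$ and Lemma \ref{L:gaf2} gives $t\|LP_t f\|_{L^2(F_\ve)}\le C e^{-d(E,F_\ve)^2/6t}\|f\|_{L^2(E)}$, while $\|\varphi P_t f\|_{L^2}\le\|f\|_{L^2(E)}$ by contractivity. Collecting terms, multiplying through by $t$, and taking square roots yields a bound of the form $\sqrt{t}\,(\,t^{-1/2}+\ve^{-1}t^{1/2}\,)\,e^{-c\,d(E,F_\ve)^2/t}\|f\|_{L^2(E)}$.

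It remains to choose $\ve$ to optimize. Since $d(E,F_\ve)\ge d(E,F)-\ve$, taking $\ve$ a fixed small multiple of $d(E,F)$ keeps $d(E,F_\ve)$ comparable to $d(E,F)$ while making $\ve^{-1}t^{1/2}\le C\,t^{1/2}/d(E,F)$; the polynomial prefactor $t^{1/2}/d(E,F)$ is then absorbed into the Gaussian at the cost of slightly enlarging the constant in the exponent, by the elementary inequality $s e^{-c s^2}\le C_\beta e^{-(c-\beta)s^2}$ with $s=d(E,F)/\sqrt t$. This produces the claimed estimate with some $C\ge 0$ and $\alpha>0$. The main obstacle is the bookkeeping in this last step: one must verify that enlarging $F$ to $F_\ve$ does not destroy the Gaussian rate, which forces the choice $\ve\asymp d(E,F)$ and, correspondingly, a value of $\alpha$ strictly smaller than the constants $1/4$ and $1/6$ appearing in Lemmas \ref{L:gaf1} and \ref{L:gaf2}.
\end{proof}
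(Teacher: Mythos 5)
Your plan follows essentially the same route as the paper: localize with the cut-off of Corollary \ref{C:bumpsforsets} on $F_\ve$ with $\ve$ a fixed multiple of $d(E,F)$, integrate by parts, and control the two resulting terms with Lemmas \ref{L:gaf2} and \ref{L:gaf1} (the paper keeps the cross term as a square root and solves the ensuing quadratic inequality rather than absorbing by Young, but this is cosmetic). The one point to repair is your final absorption step: the polynomial prefactor you generate is $t^{1/2}/d(E,F)$ (indeed $t/\ve^2\asymp t/d(E,F)^2$ on the squared quantity), which blows up rather than being absorbed into the Gaussian when $d(E,F)\le\sqrt t$, so that regime must be dispatched at the outset by the unlocalized Caccioppoli bound of Corollary \ref{C:pcaccioppoli} --- exactly as the paper does by assuming $d(E,F)>\sqrt t$ from the start.
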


\begin{proof}
We adapt the argument on p. 930 in \cite{ACDH}. If $d(E,F)\le\sqrt t$ there is nothing to prove. We can thus assume that $d(E,F)> \sqrt t$. With $\ve = \frac{d(E,F)}{3}$, consider the set $F_\ve$, and pick a function $\varphi\in $ Lip$(\M)$ supported in $F_\ve$ as in Corollary \ref{C:bumpsforsets}. We have
\begin{align*}
 t \int_\M \varphi^2 \Gamma(P_t f) d\mu & = \frac t2 \int_\M \varphi^2 \left[L\left((P_t f)^2\right) - 2 P_t f L(P_t f)\right] d\mu
\\
& = - t \int_\M \varphi^2 P_t f LP_t f d\mu - 2t \int_\M \varphi P_t f \Gamma(\varphi,P_t f) d\mu
\\
& \le t ||L P_t f||_{L^2(F_\ve)} ||P_t f||_{L^2(\M)}  + 2 \left(t \int_\M\varphi^2 \Gamma(P_t f) d\mu\right)^{1/2}\left(t \int_\M \Gamma(\varphi) (P_t f)^2 d\mu\right)^{1/2}.
\end{align*}
We now use Lemma \ref{L:gaf2} and the $L^2$ continuity of $P_t$ to obtain for some $\alpha>0$
\[
 t ||L P_t f||_{L^2(F_\ve)} ||P_t f||_{L^2(\M)} \le C e^{-\frac{d(E,F_\ve)^2}{6t}} ||f||^2_{L^2(E)}\le C e^{-\alpha \frac{d(E,F)^2}{t}} ||f||^2_{L^2(E)}. 
\]
Recalling that $\sqrt t < d(E,F)$, and using the support property of $\Gamma(\varphi)$, and the estimate $\Gamma(\varphi) \le C d(E,F)^{-2}$ from Corollary \ref{C:bumpsforsets}, we conclude
\begin{align*}
\left(\sqrt t \int_\M \Gamma(\varphi) (P_t f)^2 d\mu\right)^{1/2} & \le C \left(\int_{F_\ve} (P_t f)^2 d\mu\right)^{1/2} 
\\
& \le C e^{-\frac{d(E,F_\ve)^2}{4t}} ||f||_{L^2(E)} \le C e^{-\alpha \frac{d(E,F)^2}{t}} ||f||_{L^2(E)},
\end{align*}
for an appropriate $\alpha>0$.
We conclude
\[
t \int_\M \varphi^2 \Gamma(P_t f) d\mu  \le C e^{-\alpha \frac{d(E,F)^2}{t}} ||f||^2_{L^2(E)} + C e^{-\alpha \frac{d(E,F)^2}{t}}  ||f||_{L^2(E)} \left(t \int_\M \varphi^2 \Gamma(P_t f) d\mu\right)^{1/2}.
\]
A trivial estimate allows to conclude that
\[
t \int_\M \varphi^2 \Gamma(P_t f) d\mu  \le C e^{-\alpha \frac{d(E,F)^2}{t}} ||f||^2_{L^2(E)}.
\]
Since
\[
t \| \sqrt{\Gamma (P_t f)} \|^2_{L^2(F)} \le t \int_\M \varphi^2 \Gamma(P_t f) d\mu,
\]
we have reached the desired conclusion.

\end{proof}

\subsubsection{The completion of the proof Theorem \ref{T:riesz} in the range $2\le p<\infty$}

We are now in a position to prove Theorem \ref{T:riesz} in the range $2\le p<\infty$. As mentioned above, all we need to do at this point is combine Theorem  \ref{T:main}, Corollary \ref{C:caccioppoli} and  Lemmas \ref{L:gaf1}, \ref{L:gaf2} and \ref{L:gaf3} with the real variable results in \cite{ACDH}.

\section{Pointwise gradient estimates of the heat kernel}\label{S:pge}

In this section we investigate the validity of pointwise gradient estimates of the heat kernel. Besides being interesting in their own right, such estimates are also connected with Theorem \ref{T:riesz} in the range $2\le p<\infty$. For a detailed discussion of this aspect we refer the reader to \cite{ACDH}.

\begin{theorem}\label{T:ugb}
There exists a constant $C = C(d,\kappa,\rho_2)>0$ such that for every $x,y\in \M$
and $t>0$ one has
\[
\sqrt{\Gamma\left(p(\cdot,y,t)\right)}(x)  \le \frac{C}{\sqrt t V(y,\sqrt t)}.
\]
\end{theorem}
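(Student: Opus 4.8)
The plan is to derive the pointwise gradient bound by combining the Caccioppoli estimate of Proposition~\ref{P:caccioppoli} with the Gaussian upper bound \eqref{gue}, exploiting the semigroup law. The starting point is the Chapman--Kolmogorov identity $p(\cdot,y,2t)=P_t\big(p(\cdot,y,t)\big)$. Applying Proposition~\ref{P:caccioppoli} with $f=p(\cdot,y,t)$, and discarding the nonnegative term $\rho_2 t\,\Gamma^Z(P_t f)$ on the left together with the nonpositive term $-(P_t f)^2$ on the right, I obtain
\[
\Gamma\big(p(\cdot,y,2t)\big)(x)\le \frac{1+\frac{2\kappa}{\rho_2}}{2t}\,P_t\big(p(\cdot,y,t)^2\big)(x)=\frac{1+\frac{2\kappa}{\rho_2}}{2t}\int_\M p(x,z,t)\,p(z,y,t)^2\,d\mu(z).
\]

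To bound the integral, I use the symmetry $p(z,y,t)=p(y,z,t)$ together with the Gaussian upper bound in its single-ball form (which follows from \eqref{gue} and the doubling property \eqref{dct}), giving $p(z,y,t)^2\le \frac{C}{V(y,\sqrt t)^2}\exp\!\big(-\frac{2d(z,y)^2}{(4+\ve)t}\big)$. Factoring out this constant, bounding the exponential by $1$, and invoking stochastic completeness \eqref{sc} in the form $\int_\M p(x,z,t)\,d\mu(z)=1$, I arrive at $\int_\M p(x,z,t)\,p(z,y,t)^2\,d\mu(z)\le \frac{C}{V(y,\sqrt t)^2}$, hence $\Gamma\big(p(\cdot,y,2t)\big)(x)\le \frac{C}{t\,V(y,\sqrt t)^2}$. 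Finally, replacing $2t$ by $t$ and using the doubling inequality \eqref{dcsr} to pass from $V(y,\sqrt{t/2})$ to $V(y,\sqrt t)$, then extracting the square root, yields exactly the asserted estimate $\sqrt{\Gamma(p(\cdot,y,t))}(x)\le \frac{C}{\sqrt t\,V(y,\sqrt t)}$.

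The one point requiring genuine care is that Proposition~\ref{P:caccioppoli} is proved for $f\in C^\infty_0(\bM)$, whereas $p(\cdot,y,t)$ is not compactly supported, so the pointwise Caccioppoli inequality cannot be applied to it directly. I would remove this obstacle by approximation: apply the proposition to the truncations $h_k\,p(\cdot,y,t)\in C^\infty_0(\bM)$, where $h_k\nearrow 1$ is the exhaustion furnished by (H.1), and then let $k\to\infty$. The limit on the right is handled by monotone/dominated convergence, the required integrability and tail control being provided by the Gaussian bound \eqref{gue} and the volume estimate \eqref{exp}; the limit on the left is handled by the smoothing property of $P_t$ and interior parabolic (subelliptic) regularity, which upgrade the $L^2$-convergence $P_t(h_k\,p(\cdot,y,t))\to p(\cdot,y,2t)$ to local $C^1$-convergence and hence to pointwise convergence of $\Gamma(\cdot)$. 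This approximation step is the main obstacle; once it is in place, the remaining estimates are routine consequences of the Gaussian bound, stochastic completeness, and volume doubling.
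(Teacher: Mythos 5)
Your argument is correct and rests on the same two pillars as the paper's proof: the Chapman--Kolmogorov identity, a Caccioppoli-type gradient bound for the semigroup, and the on-diagonal Gaussian upper bound followed by volume doubling. The difference is in the implementation. You apply the pointwise estimate of Proposition \ref{P:caccioppoli} directly to $f=p(\cdot,y,t)$, which forces you to confront the fact that this function is not compactly supported, and you resolve that with a truncation-plus-hypoellipticity argument. That argument is workable (the $h_k p(\cdot,y,t)$ are indeed in $C^\infty_0(\bM)$, the right-hand side passes to the limit by monotone convergence, and local smooth convergence of the caloric functions $P_s(h_k p(\cdot,y,t))$ follows from hypoellipticity), but it is also unnecessary: the paper has already packaged exactly this extension as Corollary \ref{C:caccioppoli}, whose estimate \eqref{Linfty} is proved for arbitrary $f\in L^\infty(\bM)$ by performing the duality/truncation argument once and for all on the kernel itself. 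The paper's proof therefore simply writes $p(x,y,t)=P_{t/2}\bigl(p(\cdot,y,t/2)\bigr)(x)$, applies \eqref{Linfty} with $f=p(\cdot,y,t/2)$, and bounds $\|p(\cdot,y,t/2)\|_{L^\infty(\M)}\le C/V(y,\sqrt{t/2})$ by the on-diagonal Gaussian estimate; no stochastic completeness and no integral estimate of $P_t(f^2)$ are needed. Your route yields the same constant up to doubling, so the cleaner course is to cite \eqref{Linfty} and skip the approximation step that you correctly single out as the delicate point.
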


\begin{proof}
We fix a point $x\in M$ and $s>0$ and begin with the observation that if we consider the function 
\[
f_{x,s}(z) = p(x,z,s),
\]
then thanks to Theorem \ref{T:ugb} there exists a constant $C>0$ (independent of $x\in \M$ and $s>0$) such that $f_{x,s}\in L^\infty(\M)$ and 
\[
\left\|f_{x,s}\right\|_{L^\infty(\M)} \le \frac{C}{V(x,\sqrt s)}.
\]
We next observe that, given points $x, y\in \M$ and $t>0$, then we can write
\[
p(x,y,t) = P_{\frac t2}(f_{y,\frac t2})(x).
\]
For $x\in \M$ fixed, let then $U_x$ be a sufficiently small neighborhood of $x$ in which we can write $L$ as in \eqref{Lrep}. We thus have
\begin{align*}
|X_i p(\cdot,y,t)(x)| & = |X_i\left(P_{\frac t2}(f_{y,\frac t2})\right)(x)|\le \sqrt{\Gamma\left(P_{\frac t2}(f_{y,\frac t2})\right)}(x)
\\
& \le \left\|\sqrt{\Gamma\left(P_{\frac t2}(f_{y,\frac t2})\right)}\right\|_{L^\infty(\M)} \le \frac{C}{\sqrt t} \left\|f_{y,\frac t2}\right\|_{L^\infty(\M)}
\\
& \le \frac{C}{\sqrt t V(y,\sqrt{t/2})},
\end{align*}
where in the last inequality we have used the $L^\infty$ Caccioppoli inequality in Corollary \ref{C:caccioppoli}. 
This estimate finally gives
\[
\sqrt{\Gamma\left(p(\cdot,y,t)\right)}(x) = \sqrt{\sum_{i=1}^d \left(X_i p(\cdot,y,t)(x)\right)^2} \le \frac{C}{\sqrt t V(y,\sqrt{t/2})}.
\]
The proof is completed by an application of \eqref{dcsr} in Theorem \ref{T:main}.

\end{proof}

\begin{theorem}\label{T:gei}
For every $\ve>0$ there exists a constant $C(\ve) = C(d,\kappa,\rho_2,\ve)>0$ such that for every $x,y\in \M$
and $t>0$ one has
\[
\sqrt{\Gamma\left(p(\cdot,y,t)\right)}(x)  \le \frac{C(\ve)}{\sqrt t V(y,\sqrt t)} \exp\left(-\frac{d(x,y)^2}{4(1+\ve)t}\right).
\]
\end{theorem}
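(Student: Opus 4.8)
The plan is to upgrade the on-diagonal gradient bound of the preceding theorem to its Gaussian form by combining it with the Gaussian upper bound for the heat kernel established in Section \ref{S:background}, together with the Gaffney estimate of Lemma \ref{L:gaf3}, through the reproducing property of the semigroup. First I would dispose of the easy regime: if $d(x,y)\le \sqrt t$, the factor $\exp(-d(x,y)^2/(4(1+\ve)t))$ is bounded below by a positive constant, so the estimate follows at once from the preceding theorem. Hence I assume $r:=d(x,y)>\sqrt t$.

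Fix a small parameter $\eta=\eta(\ve)>0$ and split time as $t=\tau+\sigma$ with $\tau\sim\eta^2 t$. Writing $p(\cdot,y,t)=P_\tau g$ with $g=p(\cdot,y,\sigma)$, I decompose $g=g_1+g_2$, where $g_1=g\,\mathbf 1_{B(x,\eta r)}$ collects the mass near $x$ and $g_2=g-g_1$ the mass far from $x$. For the near piece I use the pointwise bound $\sqrt{\Gamma(P_\tau g_1)}(x)\le \|\sqrt{\Gamma(P_\tau g_1)}\|_\infty\le (C/\sqrt\tau)\|g_1\|_\infty$ from Corollary \ref{C:caccioppoli}. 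Since every $z\in B(x,\eta r)$ satisfies $d(z,y)\ge (1-\eta)r$, the Gaussian upper bound gives $\|g_1\|_\infty\le C(\ve)\,V(y,\sqrt t)^{-1}\exp(-(1-\eta)^2 r^2/((4+\ve')\sigma))$; because $\sigma\approx t$ and $\eta,\ve'$ are small, the exponent is at most $-r^2/(4(1+\ve)t)$, and the volume is transferred from $x$ to $y$ by \eqref{dct}, the resulting polynomial in $r/\sqrt t$ being absorbed into the Gaussian at the cost of enlarging $\ve$. This is the step that forces the constant $4(1+\ve)$ rather than a worse one: the sharp constant of the heat-kernel bound is preserved precisely because the cutoff radius $\eta r$ is only a small fraction of $r$.

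For the far piece, whose support lies at distance $\ge \eta r$ from $x$, the decay comes from Lemma \ref{L:gaf3}, giving $\sqrt\tau\,\|\sqrt{\Gamma(P_\tau g_2)}\|_{L^2(B(x,\eta r/2))}\le C\exp(-\al(\eta r/2)^2/\tau)\,\|g_2\|_2$, while $\|g_2\|_2\le C\,V(y,\sqrt t)^{-1/2}$ by the $L^\infty$--$L^1$ control on $g$. Taking $\tau$ a sufficiently small multiple of $\eta^2 t$ makes the exponent $-\al\eta^2 r^2/(4\tau)$ smaller than $-r^2/(4(1+\ve)t)$, so the far piece is at least as good as the target; the non-sharp Gaffney constant $\al$ is harmless here because it multiplies the already negligible far contribution.

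The one genuinely delicate point is that Lemma \ref{L:gaf3} controls the far piece only in $L^2$ over a ball about $x$, whereas the theorem is pointwise. I would recover the pointwise value using the local regularity available in our framework: since $(\bM,d,\mu)$ is doubling \eqref{dcsr} and supports the $L^2$ Poincar\'e inequality \eqref{pisr}, the de Giorgi--Nash--Moser theory applies, and in particular $\Gamma(P_\tau g_2)(x)$ is dominated by the $L^2$ average of $\Gamma(P_s g_2)$ (equivalently, via Caccioppoli, of $(P_s g_2)^2$) over a parabolic cylinder of size $\sqrt\tau$ about $(x,\tau)$; inserting the Gaffney bound, together with the sharp $L^2$ estimate of Lemma \ref{L:gaf1} for the function itself, then yields the pointwise inequality with the same Gaussian factor. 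I expect this passage from the $L^2$ Gaffney estimate to a pointwise bound, and the bookkeeping needed to keep the exponent at $4(1+\ve)$ throughout, to be the main obstacle; the remainder is just the reproducing formula and the doubling estimates of Corollary \ref{C:dc}.
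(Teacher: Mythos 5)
Your overall architecture (near/far splitting via $p(\cdot,y,t)=P_\tau g$ with $g=p(\cdot,y,\sigma)$, Caccioppoli for the near piece, decay for the far piece) is workable, and the near piece is handled correctly; but the step you yourself flag as delicate is a genuine gap, not a routine technicality. Doubling \eqref{dcsr} together with the Poincar\'e inequality \eqref{pisr} yields, via Moser iteration, a parabolic mean value inequality for \emph{solutions} and for \emph{nonnegative subsolutions} of the heat equation; it does \emph{not} yield one for $\Gamma(u)$ when $u$ solves the heat equation. Indeed $(L-\partial_s)\Gamma(P_s g_2)=2\Gamma_2(P_s g_2)$, and under (H.3) the quantity $\Gamma_2$ alone has no sign: only the combination $\Gamma_2+\nu\Gamma_2^Z$ is bounded from below, and even then by $\left(\rho_1-\frac{\kappa}{\nu}\right)\Gamma+\rho_2\Gamma^Z$. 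So $\Gamma(P_s g_2)$ is not a subsolution, and the claim that ``$\Gamma(P_\tau g_2)(x)$ is dominated by the $L^2$ average of $\Gamma(P_s g_2)$ over a parabolic cylinder'' is precisely the kind of gradient regularity that doubling and Poincar\'e are known \emph{not} to imply --- if they did, the $p>2$ half of the Riesz transform theorem would be nearly automatic. One could try to repair this by working with $w=e^{-2\kappa s/\nu}\left(\Gamma(P_sg_2)+\nu\Gamma^Z(P_sg_2)\right)$, which is a nonnegative subsolution, but the mean value inequality then produces the space--time average of $\Gamma^Z(P_s g_2)$, for which no Gaffney-type control is available in the paper's toolbox.

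The far piece can in fact be closed without Gaffney or De Giorgi--Nash--Moser: Proposition \ref{P:caccioppoli} is already a \emph{pointwise} estimate, giving $\Gamma(P_\tau g_2)(x)\le \frac{C}{\tau}P_\tau(g_2^2)(x)$; since $g_2^2$ is supported outside $B(x,\eta r)$ and $\|g_2\|_2^2\le p(y,y,2\sigma)\le C/V(y,\sqrt t)$ by Chapman--Kolmogorov, the Gaussian upper bound for $p(x,\cdot,\tau)$ on that support yields $P_\tau(g_2^2)(x)\le \frac{C(\ve')}{V(x,\sqrt \tau)}e^{-\eta^2 r^2/((4+\ve')\tau)}\|g_2\|_2^2$, which with $\tau$ a small multiple of $\eta^2 t$ beats the target exponent, the volumes being transferred by \eqref{dct}. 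With that substitution your argument goes through. For the record, the paper's own proof is entirely different and much shorter: it invokes Theorem 4.11 of \cite{CS}, where a Phragm\'en--Lindel\"of argument upgrades the on-diagonal bounds $p(x,x,t)\le C/V(x,\sqrt t)$ and $\sqrt{\Gamma(p(\cdot,y,t))}(x)\le C/(\sqrt t\, V(y,\sqrt t))$ (the theorem immediately preceding this one), together with doubling, directly to the Gaussian form with constant $4$ up to a polynomial factor that is then absorbed at the cost of the $\ve$.
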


\begin{proof}
It suffices to appeal to Theorem 4.11 in \cite{CS}. In such result the authors, by a beautiful use of Phragm\'en-Lindel\"of theory, prove that the combination of \eqref{dcallscales} in Corollary \ref{C:dc}, and of the estimates
\[
p(x,x,t)\le \frac{C}{V(x,\sqrt t)},
\]
and
\[
\sqrt{\Gamma\left(p(\cdot,y,t)\right)}(x)  \le \frac{C}{\sqrt t V(y,\sqrt t)},
\]
allows to improve the estimate in Theorem \ref{T:ugb} into the following one
\[
\sqrt{\Gamma\left(p(\cdot,y,t)\right)}(x)  \le \frac{C}{\sqrt t V(y,\sqrt t)}\left(1 + \frac{d(x,y)^2}{4t}\right)^{1+3Q} \exp\left(-\frac{d(x,y)^2}{4t}\right),
\]
where $Q$ is as \eqref{dcallscales}. From the latter estimate the desired conclusion follows.

\end{proof}

\vskip 0.3in

\end{document}